\newtheorem{theorem}{Theorem}[section]
\newtheorem{lemma}{Lemma}[section]
\newtheorem{corollary}{Corollary}[section]
\newtheorem{proposition}{Proposition}[section]
\newtheorem{remark}{Remark}[section]
\numberwithin{equation}{section}
\def\Xint#1{\mathchoice
   {\XXint\displaystyle\textstyle{#1}}%
   {\XXint\textstyle\scriptstyle{#1}}%
   {\XXint\scriptstyle\scriptscriptstyle{#1}}%
   {\XXint\scriptscriptstyle\scriptscriptstyle{#1}}%
   \!\int}
\def\XXint#1#2#3{{\setbox0=\hbox{$#1{#2#3}{\int}$}
     \vcenter{\hbox{$#2#3$}}\kern-.5\wd0}}
\def\dashint{\Xint-}
\def\cc{\setcounter{equation}{0}
\setcounter{figure}{0}\setcounter{table}{0}}
\begin{document}

\markboth{\centerline{Vladimir Ryazanov, Sergei Volkov}}
{\centerline{Prime ends and mappings on Riemann surfaces}}

\author{{V. Ryazanov, S. Volkov}}

\title{{\bf Prime ends and mappings\\ on Riemann
surfaces}}

\maketitle

\large \begin{abstract} It is proved criteria for continuous and
homeomorphic extension to the boundary of mappings with finite
distortion between domains on the Riemann surfaces by prime ends of
Caratheodory.
\end{abstract}

\bigskip
{\bf 2010 Mathematics Subject Classification: Primary   31A05,
31A20, 31A25, 31B25, 35Q15; Se\-con\-da\-ry 30E25, 31C05, 34M50,
35F45}

\large
\cc

\section{Introduction}

The theory of the boundary behavior in the prime ends for the
mappings with finite distortion has been developed in \cite{KPR} for
the plane domains and in \cite{KR} for the spatial domains. The
pointwise boundary behavior of the mappings with finite distortion
in regular domains on Riemann surfaces was recently studied by us in
\cite{RV3}. Moreover, the problem was investigated in regular
domains on the Riemann manifolds for $n\ge 3$ as well as in metric
spaces, see e.g. \cite{ARS} and \cite{Smol}. It is necessary to
mention also that the theory of the boundary behavior of Sobolev's
mappings has significant applications to the boundary value problems
for the Beltrami equations and for analogs of the Laplace equation
in anisotropic and inhomogeneous media, see e.g. \cite{BGMR},
\cite{GR}--\cite{GRY1}, \cite{KPRS}, \cite{KPRS1}, \cite{MRSY},
\cite{P}, \cite{RSSY} and relevant references therein.

For basic definitions and notations, discussions and historic
comments in the mapping theory on the Riemann surfaces, see our
previous papers \cite{RV1}--\cite{RV4}.

\section{Definition of the prime ends and preliminary remarks}

First recall the necessary definitions of some general notions.
Given a topological space $T$, a {\bf path in $T$} is a continuous
map $\gamma: [a,b]\to T.$ Given $A, B$, $C\subseteq T,$
$\Delta(A,B,C)$ denotes a collection of all paths $\gamma$ joining
$A$ and $B$ in $C,$ i.e., $\gamma(a)\in A$, $\gamma(b)\in B$ and
$\gamma(t)\in C$ for all $t\in (a,b).$ In what follows, $|\gamma|$
denotes the {\bf locus} of $\gamma$, i.e. the image $\gamma([a,b])$.

We act similarly to Caratheodory \cite{Car$_2$} under the definition
of the prime ends of domains on a Riemann surface $\Bbb S$, see
Chapter 9 in \cite{CL}. First of all, recall that a continuous
mapping $\sigma: \Bbb I\to \Bbb S$, $\Bbb I=(0,1)$, is called a {\bf
Jordan arc} in $\Bbb S$ if $\sigma(t_1)\neq\sigma(t_2)$ for $t_1\neq
t_2$. We also use the notations $\sigma$, $\overline{\sigma}$ and
$\partial\sigma$ for $\sigma(\Bbb I)$, $\overline{\sigma(\Bbb I)}$
and $\overline{\sigma(\Bbb I)}\setminus\sigma(\Bbb I)$,
correspondingly. A {\bf cross--cut} of a domain $D\subset\Bbb S$ is
either a closed Jordan curve or a Jordan arc $\sigma$ in the domain
$D$ with both ends on $\partial D$ splitting $D$.

A sequence $\sigma_1,\ldots, \sigma_m,\ldots$ of cross-cuts of $D$
is called a {\bf chain} in $D$ if:

(i) $\overline{\sigma_i}\cap\overline{\sigma_j}=\varnothing$ for
every $i\neq j$, $i,j= 1,2,\ldots$;

(ii) $\sigma_{m}$ splits $D$ into 2 domains one of which contains
$\sigma_{m+1}$ and another one $\sigma_{m-1}$ for every $m>1$;

(iii)  $\delta(\sigma_{m})\to0$ as $m\to\infty$.

Here $\delta(E)=\sup\limits_{p_1, p_2\in\Bbb S}\delta(p_1, p_2)$
denotes the diameter of a set $E$ in $\Bbb S$ with respect to an
arbitrary metric $\delta$ in $\Bbb S$ agreed with its topology, see
\cite{RV1}--\cite{RV3}.

Correspondingly to the definition, a chain of cross-cuts $\sigma_m$
generates a sequence of domains $d_m\subset D$ such that $d_1\supset
d_2\supset\ldots\supset d_m\supset\ldots$ and $\, D\cap\partial\,
d_m=\sigma_m$. Two chains of cross-cuts $\{\sigma_m\}$ and
$\{\sigma_k'\}$ are called {\bf equivalent} if, for every
$m=1,2,\ldots$, the domain $d_m$ contains all domains $d_k'$ except
a finite number and, for every $k=1,2,\ldots$, the domain $d_k'$
contains all domains $d_m$ except a finite number, too. A {\bf prime
end} $P$ of the domain $D$ is an equivalence class of chains of
cross-cuts of $D$. Later on, $E_D$ denote the collection of all
prime ends of a domain $D$ and $\overline D_P=D\cup E_D$ is its
completion by prime ends.

Next, we say that a sequence of points $p_l\in D$ is {\bf convergent
to a prime end} $P$ of $D$ if, for a chain of cross--cuts $\{
\sigma_m\}$ in $P$, for every $m=1,2,\ldots$, the domain $d_m$
contains all points $p_l$ except their finite collection. Further,
we say that a sequence of prime ends $P_l$ converge to a prime end
$P$ if, for a chain of cross--cuts $\{ \sigma_m\}$ in $P$, for every
$m=1,2,\ldots$, the domain $d_m$ contains chains of cross--cuts $\{
\sigma_k'\}$ in all prime ends $P_l$ except their finite collection.

Now, let $D$ be a domain in the compactification
$\overline{\mathbb{S}}$ of a Riemann surface $\Bbb S$ by
Kerekjarto-Stoilow, see a discussion in \cite{RV1}--\cite{RV3}. Open
neighborhoods of points in $D$ is induced by the topology of
$\overline{\mathbb{S}}$. A basis of neighborhoods of a prime end $P$
of $D$ can be defined in the following way. Let $d$ be an arbitrary
domain from a chain in $P$. Denote by $d^*$ the union of $d$ and all
prime ends of $D$ having some chains in $d$. Just all such $d^*$
form a basis of open neighborhoods of the prime end $P$. The
corresponding topology on $\overline D_P$ is called the {\bf
topology of prime ends}.

Let $P$ be a prime end of $D$ on a Riemann surface ${{\Bbb S}}$,
$\{\sigma_m\}$ and $\{\sigma_m'\}$ be two chains in $P$, $d_m$ and
$d_m'$ be domains corresponding to $\sigma_m$ and $\sigma_m'$. Then
$$\bigcap\limits_{m=1}\limits^{\infty}\overline{d_m}\subseteq
\bigcap\limits_{m=1}\limits^{\infty}\overline{d_m'}\subset
\bigcap\limits_{m=1}\limits^{\infty}\overline{d_m}\ ,$$ and, thus,
$$\bigcap\limits_{m=1}\limits^{\infty}\overline{d_m}=
\bigcap\limits_{m=1}\limits^{\infty}\overline{d_m'}\ ,$$ i.e. the
set named by a {\bf body of the prime end} $P$
\begin{equation}\label{eq11}
I(P):=\bigcap\limits_{m=1}\limits^{\infty}\overline{d_m}
\end{equation} depends only on $P$ but not on a choice of a chain of
cross--cuts $\{\sigma_m\}$ in $P$.

It is necessary to note also that, for any chain $\{ \sigma_m\}$ in
the prime end $P$,
\begin{equation}\label{eq111}
\Omega\ :=\ \bigcap\limits_{m=1}\limits^{\infty}{d_m}\ =\
\varnothing\ .
\end{equation}
Indeed, every point $p$ in $\Omega$ belongs to $D$. Moreover, some
open neighborhood of $p$ in $D$ should belong to $\Omega$. In the
contrary case each neighborhood of $p$ should have a point in some
$\sigma_m$. However, in view of condition (iii) then $p\in\partial
D$ that should contradict the inclusion $p\in D$. Thus, $\Omega$ is
an open set and if $\Omega$ would be not empty, then the
connectedness of $D$ would be broken because $D=\Omega\cup\Omega^*$
with the open set $\Omega^* := D\setminus I(P)$.

In view of conditions (i) and (ii), we have by (\ref{eq111}) that
$$I(P)=\bigcap\limits_{m=1}\limits^{\infty}(\partial d_m\cap\partial D)=
\partial D\ \cap\ \bigcap\limits_{m=1}\limits^{\infty}\partial
d_m\ .$$ Thus, we obtain the following statement.

\begin{proposition}\label{pr1} {\it For each prime end $P$ of a domain $D$
on a Riemann surface,} \begin{equation}\label{eq1}
I(P)\subseteq\partial D.\end{equation}
\end{proposition}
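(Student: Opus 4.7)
The plan is to use the two facts established just before the statement: $\Omega := \bigcap_m d_m = \varnothing$, and the portion of $\partial d_m$ lying in $D$ coincides with the cross-cut $\sigma_m$. Together these imply that any point of $\bigcap_m \overline{d_m}$ which lies in $D$ must simultaneously belong to every $\sigma_m$ beyond some index, violating condition (i).

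Concretely, I would start by choosing $p \in I(P)$ and assuming for contradiction that $p \in D$. Since $\Omega = \varnothing$ by (\ref{eq111}), there exists an index $m_0$ with $p \notin d_{m_0}$; combined with $p \in \overline{d_{m_0}}$ this gives $p \in \partial d_{m_0}$. Because $\sigma_{m_0}$ is a cross-cut of $D$, its role as a splitting Jordan arc/curve with endpoints on $\partial D$ forces $\partial d_{m_0} \subseteq \sigma_{m_0} \cup \partial D$, so the hypothesis $p \in D$ yields $p \in \sigma_{m_0}$.

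Next I would iterate this reasoning for every $m > m_0$. Condition (ii) ensures the nesting $d_m \subseteq d_{m_0}$, and $d_m$ is disjoint from $\sigma_{m_0}$ because $\sigma_{m_0}$ lies in the boundary that separates $d_{m_0}$ from its complementary piece of $D$. Hence $p \notin d_m$, and the same boundary decomposition as above yields $p \in \sigma_m$. Consequently $p \in \overline{\sigma_{m_0}} \cap \overline{\sigma_m}$ for every $m > m_0$, which directly contradicts condition (i).

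The only real obstacle is bookkeeping: one needs to verify cleanly that $\partial d_m \cap D = \sigma_m$ (using the definition of a cross-cut) and that condition (ii) truly forces $d_m \subseteq d_{m_0}$ for $m > m_0$, rather than some other induced component. Once these routine structural facts are in place, the contradiction with (i) is immediate and the inclusion $I(P) \subseteq \partial D$ follows, matching the one-line derivation $I(P) = \partial D \cap \bigcap_m \partial d_m$ already sketched in the paragraph preceding the statement.
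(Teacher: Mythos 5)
Your proposal is correct and follows essentially the same route as the paper: it relies on the previously established facts that $\bigcap_m d_m=\varnothing$ and $D\cap\partial d_m=\sigma_m$, and then derives the contradiction with condition (i) that the paper compresses into the one-line identity $I(P)=\partial D\cap\bigcap_m\partial d_m$. You have merely made explicit the bookkeeping that the paper leaves implicit, so no further comparison is needed.
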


\begin{remark}\label{rmk1} {\rm If $D$ is a domain in $\overline{\Bbb S}$ with $\partial
D\subset\Bbb S$, then $I(P)$ is a continuum, i.e. it is a connected
compact set, see e.g. I(9.12) in \cite{Wh}, see also I.9.3 in
\cite{Bou}, and $I(P)$ belongs to only one (connected) component
$\Gamma$ of $\partial D$. Hence we say that the component $\Gamma$
is {\bf associated with the prime end} $P$.

\medskip

Moreover, every prime end of $D$ in the case contains a {\bf
convergent chain} $\{ \sigma_m\}$, i.e., that is contracted to a
point $p_0\in\partial D$. Furthermore, each prime end $P$ contains a
{\bf spherical chain} $\{ \sigma_m\}$ lying on circles
$S(p_0,r_m)=\{ p\in\Bbb S :\ \delta(p,p_0)=r_m\}$ with
$p_0\in\partial D$ and $r_m\to0$ as $m\to\infty$. The proof is
perfectly similar to Lemma 1 in \cite{KR} after the replacement of
metrics, see also Theorem  VI.7.1 in \cite{New}, and hence we omit
it. Note by the way that the condition (iii) does not depend in the
case on the choice of the metric $\delta$ agreed with the topology
of $\Bbb S$ because $\partial D$ has a compact neighborhood.}
\end{remark}

\bigskip

It is known that the conformal modulus $M$ of the family of all
paths joining a pair of the opposite sides of a rectangle is equal
to the ratio of lengths of other pair of opposite sides and their
own, see e.g. I.4.3 in \cite{LV}. This simple fact gives a series of
useful consequences.

\begin{corollary}\label{corI} {\it Let $S$ be the open sector of the ring $A=\{ z\in\mathbb C:
r_1<|z-z_0|<r_2\}$, $z_0\in\mathbb C$, between the rays $R_k=\{
z\in\mathbb C: z=z_0+re^{i\alpha_k},\ r\in(0,\infty)\}$, $k=1,2$,
$0\le\alpha_1<\alpha_2\le2\pi$. Then
\begin{equation}\label{eqMODULI}
M(\Delta(R_1,R_2,S))\ =\
\frac{\log\frac{r_2}{r_1}}{\alpha_2-\alpha_1},\ \ \ \
M(\Delta(C_1,C_2,S))\ =\
\frac{\alpha_2-\alpha_1}{\log\frac{r_2}{r_1}}
\end{equation}
where $C_k$ are the boundary circles $\{ z\in\mathbb
C:|z-z_0|=r_k\}$, $k=1,2$, of the ring $A$.} \end{corollary}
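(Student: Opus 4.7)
The plan is to pull back the sector to a rectangle by a conformal map, where the two modulus computations become the stated cases of the rectangle formula quoted just before the corollary. Since the sector $S$ does not wind around $z_0$, a single-valued branch of the logarithm exists on $S$, so the map
$$w\ =\ \log(z-z_0)\ =\ \log|z-z_0|\ +\ i\,\arg(z-z_0)$$
is a conformal bijection from $S$ onto the open rectangle
$$R\ =\ (\log r_1,\log r_2)\ \times\ (\alpha_1,\alpha_2)\ \subset\ \mathbb C.$$
The first step is to verify the boundary correspondence: the radial ray $R_k$ maps onto the horizontal side $\{(x,\alpha_k):\log r_1<x<\log r_2\}$ of $R$, while the circular arc $C_k\cap\overline{S}$ maps onto the vertical side $\{(\log r_k,y):\alpha_1<y<\alpha_2\}$.

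The second step is to invoke conformal invariance of the modulus under $w=\log(z-z_0)$, which gives
$$M(\Delta(R_1,R_2,S))\ =\ M(\Delta(L_1,L_2,R)),\qquad M(\Delta(C_1,C_2,S))\ =\ M(\Delta(V_1,V_2,R)),$$
where $L_1,L_2$ are the two horizontal sides of $R$ (of length $\log(r_2/r_1)$) and $V_1,V_2$ the two vertical sides (of length $\alpha_2-\alpha_1$).

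The third step is to apply the rectangle identity quoted in the paragraph preceding the corollary: for the family joining one pair of opposite sides of a rectangle, the modulus equals the length of that pair divided by the length of the perpendicular pair. Paths in $\Delta(L_1,L_2,R)$ join the two sides of length $\log(r_2/r_1)$, so
$$M(\Delta(L_1,L_2,R))\ =\ \frac{\log(r_2/r_1)}{\alpha_2-\alpha_1},$$
while paths in $\Delta(V_1,V_2,R)$ join the two sides of length $\alpha_2-\alpha_1$, giving the reciprocal formula. Combined with the previous step this yields (\ref{eqMODULI}).

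There is essentially no hard step. The only mild care needed is in the first step, namely observing that $\alpha_2-\alpha_1\le 2\pi$ guarantees the existence of a single-valued branch of $\log(z-z_0)$ on $S$ (so the map is truly conformal and a bijection), and in checking that the induced correspondence on the four sides is as claimed so that the path families match; after that the result is immediate from the rectangle formula.
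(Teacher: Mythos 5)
Your proof is correct and follows exactly the paper's own argument: map $S$ onto the rectangle $R=\{\log r_1<\xi<\log r_2,\ \alpha_1<\eta<\alpha_2\}$ by $\log(z-z_0)$, use conformal invariance of $M$, and apply the standard rectangle modulus formula cited just before the corollary. The only difference is that you spell out the boundary correspondence and the single-valuedness of the branch of the logarithm, which the paper leaves implicit.
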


\medskip

Indeed, the conclusion follows from the invariance of the modulus
$M$ under conformal mappings because the sector $S$ is mapped  by
$\log\ (z-z_0)$ onto the rectangle $R=\{ \zeta=\xi+i\eta\in\mathbb
C:\ \log r_1<\xi<\log r_2,\ \alpha_1<\eta<\alpha_2\}$.

\begin{corollary}\label{corII}{\it Under notations of Corollary \ref{corI} and $\alpha_2-\alpha_1=\Delta$,
the modulus of all Jordan arcs joining the rays $R_1$ and $R_2$ in
the sector $S$ is greater or equal to the number
$\frac{1}{\Delta}\log \frac{r_2}{r_1}$.}
\end{corollary}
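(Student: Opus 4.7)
\medskip

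\noindent\textbf{Proof plan.} My plan is to test the modulus directly against circular arcs: pass to polar coordinates $z=z_0+re^{i\alpha}$ centred at $z_0$, observe that each circular arc at a fixed radius lying in $S$ belongs to the family in question, and apply the Cauchy--Schwarz inequality. Note that the stated bound cannot be obtained by monotonicity from Corollary~\ref{corI}, because Jordan arcs form a \emph{subfamily} of $\Delta(R_1,R_2,S)$ and monotonicity therefore gives only the reverse inequality for moduli; so a direct lower estimate is required.

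Let $\Gamma$ denote the family of Jordan arcs in $S$ joining $R_1$ and $R_2$, and fix an arbitrary admissible density $\rho$ for $\Gamma$. For every $r\in(r_1,r_2)$, the circular arc
\[
C_r\ =\ \{\,z_0+re^{i\alpha}:\ \alpha\in[\alpha_1,\alpha_2]\,\}
\]
is injective, its interior lies in $S$, and its two endpoints lie on $R_1$ and $R_2$; hence $C_r\in\Gamma$. Parametrising $C_r$ by $\alpha$ (so that $ds=r\,d\alpha$), admissibility gives
\[
\int_{\alpha_1}^{\alpha_2}\rho(z_0+re^{i\alpha})\,r\,d\alpha\ \ge\ 1 \qquad\text{for every } r\in(r_1,r_2).
\]

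Squaring this inequality and applying Cauchy--Schwarz on the interval of length $\Delta=\alpha_2-\alpha_1$ yields
\[
1\ \le\ \Delta\int_{\alpha_1}^{\alpha_2}\rho^2(z_0+re^{i\alpha})\,r^2\,d\alpha,
\]
so $\int_{\alpha_1}^{\alpha_2}\rho^2\,r\,d\alpha\ \ge\ 1/(\Delta r)$. Integrating in $r$ with the polar area element $dA=r\,dr\,d\alpha$ produces
\[
\int_S\rho^2\,dA\ =\ \int_{r_1}^{r_2}\!\!\int_{\alpha_1}^{\alpha_2}\rho^2\,r\,d\alpha\,dr\ \ge\ \frac{1}{\Delta}\int_{r_1}^{r_2}\frac{dr}{r}\ =\ \frac{1}{\Delta}\log\frac{r_2}{r_1},
\]
and taking the infimum over admissible $\rho$ gives the claimed lower bound for $M(\Gamma)$.

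I do not foresee a genuine obstacle: the essential choice is to select the circular arcs $C_r$, and not the radial segments (which would connect $C_1$ to $C_2$ and thus belong to the wrong family), as admissible test curves. Equality in Cauchy--Schwarz is attained by the density $\rho(z)=1/(\Delta\,|z-z_0|)$, which is precisely the conformal pullback of the extremal rectangular density under $\log(z-z_0)$; this confirms that the bound is sharp and coincides with the first modulus computed in Corollary~\ref{corI}.
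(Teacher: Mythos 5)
Your proof is correct, but it takes a genuinely different route from the paper's. You bound the modulus of the Jordan-arc family $\Gamma_J$ from below directly from the definition: the circular arcs $C_r$, $r\in(r_1,r_2)$, belong to $\Gamma_J$, so every admissible $\rho$ satisfies $\int_{\alpha_1}^{\alpha_2}\rho\,r\,d\alpha\ge 1$, and Cauchy--Schwarz followed by integration in $r$ gives $\int_S\rho^2\,dA\ge\frac{1}{\Delta}\log\frac{r_2}{r_1}$. This is the standard length--area argument, it is sound, and it in effect re-derives the first identity of Corollary~\ref{corI} restricted to the subfamily, together with the extremal density. The paper instead deduces the corollary \emph{from} Corollary~\ref{corI} by a minorization argument: every path $\gamma\in\Delta(R_1,R_2,S)$ contains, after removing its countably many loops, a Jordan arc $\gamma_*$ with $|\gamma_*|\subseteq|\gamma|$ still joining $R_1$ and $R_2$; hence any density admissible for $\Gamma_J$ is admissible for the whole family, so $M(\Gamma_J)\ge M(\Delta(R_1,R_2,S))=\frac{1}{\Delta}\log\frac{r_2}{r_1}$. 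Your preliminary remark that the bound ``cannot be obtained from Corollary~\ref{corI}'' is therefore too strong: plain monotonicity of the modulus under inclusion indeed points the wrong way, but the minorization principle reverses the inequality, and that is precisely the paper's two-sentence proof. What your argument buys is self-containedness and sharpness; what the paper's argument buys is the general and reusable fact that passing from a path family to its Jordan sub-arcs never decreases the modulus, which is the form in which the corollary is actually applied later (for instance in the proof of Lemma~\ref{lem1}, where the curves must be genuine cross-cuts).
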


\medskip

Indeed, every path $\gamma :[a,b]\to\mathbb C$ in
$\Delta(R_1,R_2,S)$ has a countable collection of loops because its
preimage (without the the corresponding point of cusp in $\mathbb
C$) is open in $(a,b)$. Thus, numbering its loops and removing them
by induction, we come to a Jordan arc $\gamma_*$ in
$\Delta(R_1,R_2,S)$ with its locus $|\gamma_*|\subseteq |\gamma|$.

\bigskip

\section{Some general topological lemmas}

The following statement is an analog of Proposition 2.3 in
\cite{RS}, see also Proposition 13.3 in \cite{MRSY}.

\begin{proposition}\label{pr10} Let $T$ be a topological space. Suppose that
$E_1$ and $E_2$ are sets in $T$ with
$\overline{E_1}\cap\overline{E_2}=\varnothing$. Then
\begin{equation}\label{eqC3}
\Delta\, (\, E_1\, ,\, E_2\, ,\, T\, )\ >\ \Delta\, (\, \partial
E_1\, ,\,
\partial E_2\, ,\, T\setminus(\overline E_1\cup\overline E_2)\, )\ .
\end{equation}
\end{proposition}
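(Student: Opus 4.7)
The plan is to show the stronger statement that every path in $\Delta(E_1,E_2,T)$ contains a subpath lying in $\Delta(\partial E_1,\partial E_2,T\setminus(\overline E_1\cup\overline E_2))$; in the standard terminology for families of paths, this is exactly the minorization relation denoted by the symbol ``$>$''.

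First I would fix an arbitrary $\gamma:[a,b]\to T$ in $\Delta(E_1,E_2,T)$, so that $\gamma(a)\in E_1$, $\gamma(b)\in E_2$, and introduce the two times
\[
t_1\ :=\ \sup\{\,t\in[a,b]:\gamma(t)\in\overline{E_1}\,\},\qquad t_2\ :=\ \inf\{\,t\in[t_1,b]:\gamma(t)\in\overline{E_2}\,\}.
\]
Since $\gamma(a)\in\overline{E_1}$ and $\gamma(b)\in\overline{E_2}$ both suprema/infima are taken over nonempty sets; since the sets $\gamma^{-1}(\overline{E_1})$ and $\gamma^{-1}(\overline{E_2})$ are closed by continuity, both $t_1$ and $t_2$ are attained, i.e.\ $\gamma(t_1)\in\overline{E_1}$ and $\gamma(t_2)\in\overline{E_2}$. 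The hypothesis $\overline{E_1}\cap\overline{E_2}=\varnothing$ then forces $\gamma(t_1)\notin\overline{E_2}$ and $\gamma(t_2)\notin\overline{E_1}$, hence $t_1<b$ and, pushing this further, $t_1<t_2$.

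Next I would verify that $\gamma(t_1)\in\partial E_1$ and $\gamma(t_2)\in\partial E_2$. The point here is topological rather than metric: if $\gamma(t_1)$ lay in the interior of $E_1$, a neighborhood $U\subset E_1$ of $\gamma(t_1)$ would pull back under the continuous $\gamma$ to a neighborhood of $t_1$ in $[a,b]$, producing values $t>t_1$ with $\gamma(t)\in E_1\subseteq\overline{E_1}$, contradicting the definition of $t_1$ as a supremum. Since $\gamma(t_1)\in\overline{E_1}$ but is not interior to $E_1$, it lies on $\partial E_1$; the same argument applied to $t_2$ yields $\gamma(t_2)\in\partial E_2$.

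Finally, the restriction $\gamma|_{[t_1,t_2]}$ has its endpoints on $\partial E_1$ and $\partial E_2$, and on $(t_1,t_2)$ it avoids both $\overline{E_1}$ (by the supremum definition of $t_1$ and the fact that all such $t$ satisfy $t>t_1$) and $\overline{E_2}$ (by the infimum definition of $t_2$ and $t<t_2$). Hence $\gamma|_{[t_1,t_2]}$ belongs to $\Delta(\partial E_1,\partial E_2,T\setminus(\overline{E_1}\cup\overline{E_2}))$, which is exactly what the minorization $\Delta(E_1,E_2,T)>\Delta(\partial E_1,\partial E_2,T\setminus(\overline{E_1}\cup\overline{E_2}))$ requires. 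I expect the only real subtlety to be the topological argument that $\gamma(t_1)$ cannot be interior to $E_1$ (and symmetrically for $t_2$); everything else is bookkeeping on extrema of continuous real-valued indicators, and no metric or Riemann-surface structure is used, so the proposition goes through on any topological space $T$.
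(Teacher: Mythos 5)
Your proof is correct and follows essentially the same route as the paper: you take the last time $t_1$ at which $\gamma$ meets $\overline{E_1}$ and the first subsequent time $t_2$ at which it meets $\overline{E_2}$ (the paper's $a_*$ and $b_*$), use closedness of the preimages to see these extrema are attained, and use continuity to rule out $\gamma(t_1)$, $\gamma(t_2)$ being interior points, so that the restriction $\gamma|_{[t_1,t_2]}$ is the required subpath. The only difference is presentational — the paper runs the second step as the same argument in the subspace $T\setminus E_1$, while you define $t_2$ directly — and your verification that the open part of the subpath avoids $\overline{E_1}\cup\overline{E_2}$ is, if anything, slightly more explicit.
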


\begin{proof}
Indeed, let $\gamma\in\Delta ( E_1 , E_2 , T )$, i.e. the path
$\gamma: [a,b]\to T$ is such that $\gamma(a)\in E_1$ and
$\gamma(b)\in E_2$. Note that the set $\alpha :=
\gamma^{-1}(\overline{E_1})$ is a closed subset of the segment
$[a,b]$ because $\gamma$ is continuous, see e.g. Theorem 1 in
Section I.2.1 of \cite{Bou}. Consequently, $\alpha$ is compact
because $[a,b]$ is a compact space, see e.g. I.9.3 in \cite{Bou}.
Then there is $a_*:=\max\limits_{t\in \alpha}\, t<b$ because
 $\gamma(b)\in E_2$ and by the hypothesis of the proposition
$\overline{E_1}\cap\overline{E_2}=\varnothing$. Thus,
$\gamma^{\prime}:=\gamma |_{[a_*, b]}$ belongs to $\Delta (
\partial E_1 , E_2 , T\setminus \overline E_1 )$ because $\gamma$ is
continuous and hence $\gamma^{\prime}(a_*)$ cannot be an inner point
of $E_1$.

Arguing similarly in the space $T^{\prime}=T\setminus E_1$ with
$E_1^{\prime}:=E_2$ and $E_2^{\prime}:=\partial E_1$, we obtain that
there is $b_*:=\min\limits_{\gamma^{\prime}(t)\in \overline{E_2}} t>
a_*$. Thus, by the given construction $\gamma_* :=\gamma |_{[a_*,
b_*]}$ just belongs to $\Delta(\partial E_1,\partial
E_2,T\setminus(\overline E_1\cup\overline E_2)).$
\end{proof} $\Box$

\begin{lemma}\label{lem10}
In addition to the hypothesis of Proposition \ref{pr10}, let $T$ be
a subspace of a metric space $(M,\rho)$. Suppose that $$\partial
E_1\subseteq C_1:=\{ p\in M: \rho(p,p_0)=R_1\}, \ \ \ \partial
E_2\subseteq C_2:=\{ p\in M: \rho(p,p_0)=R_2\}$$ with $p_0\in
M\setminus T$ and $R_1<R_2$. Then
\begin{equation}\label{eqC}
\Delta\, (\, E_1\, ,\, E_2\, ,\, T\, )\ >\ \Delta\, (\, C_1\, ,\,
C_2\, ,\, A\, )
\end{equation}
where
$$
A\ =\ A(p_0,R_1,R_2)\ :=\ \{ p\in M:\ R_1 < \rho(p,p_0) < R_2\}\ .
$$
\end{lemma}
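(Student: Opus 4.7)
The plan is to reduce to Proposition \ref{pr10} and then trim the resulting subpath further using the metric structure provided by $\rho$. Given any $\gamma\in\Delta(E_1,E_2,T)$, Proposition \ref{pr10} already produces a restriction $\gamma'=\gamma|_{[a_*,b_*]}$ that begins on $\partial E_1$, ends on $\partial E_2$, and whose interior avoids $\overline{E_1}\cup\overline{E_2}$. Since by hypothesis $\partial E_k\subseteq C_k$, the endpoints of $\gamma'$ already lie on $C_1$ and $C_2$ respectively. The remaining task is to show that one can further restrict $\gamma'$ to a subinterval on which the interior stays inside the open annulus $A$, after which the claim that $\gamma'$ (restricted) belongs to $\Delta(C_1,C_2,A)$ will be immediate.

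For the trimming step I would consider the continuous function $f(t):=\rho(\gamma'(t),p_0)$ on $[a_*,b_*]$, which is well-defined and positive because $p_0\in M\setminus T$ and $\gamma'$ takes values in $T$. It satisfies $f(a_*)=R_1$ and $f(b_*)=R_2$. Let $s:=\max\{t\in[a_*,b_*]:f(t)=R_1\}$, which exists because $f^{-1}(R_1)$ is a non-empty closed subset of a compact interval, and let $u:=\min\{t\in[s,b_*]:f(t)=R_2\}$, which exists by the analogous reasoning; clearly $s<u$ because $f(s)=R_1<R_2=f(u)$.

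The delicate point, and the step I expect to be the main obstacle, is verifying that the restriction $\gamma'|_{[s,u]}$ really does have its interior inside $A$, i.e. that $R_1<f(t)<R_2$ for every $t\in(s,u)$; a priori the path might loop back across $C_1$ or push past $C_2$ in between. This is handled by the intermediate value theorem together with the extremality of $s$ and $u$: a value $f(t)\le R_1$ with $t\in(s,u)$ would, combined with $f(u)=R_2$, yield some $t'\in(t,u]$ with $f(t')=R_1$, contradicting maximality of $s$, and symmetrically $f(t)\ge R_2$ would contradict minimality of $u$. With this in hand, $\gamma'|_{[s,u]}$ sends $(s,u)$ into $A$ while placing its endpoints on $C_1$ and $C_2$, so it belongs to $\Delta(C_1,C_2,A)$; since the subpath was extracted from an arbitrary $\gamma\in\Delta(E_1,E_2,T)$, this is precisely the minorization (\ref{eqC}).
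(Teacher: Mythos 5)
Your argument is correct, and it reaches the conclusion by a genuinely different route from the paper. Both proofs begin identically, by invoking Proposition \ref{pr10} to produce the restriction $\gamma_*=\gamma|_{[a_*,b_*]}$ with $\gamma_*(a_*)\in\partial E_1\subseteq C_1$, $\gamma_*(b_*)\in\partial E_2\subseteq C_2$, and both note that this subpath need not yet lie in $A$. From there the paper works topologically: it decomposes the open set $\gamma_*^{-1}(A)$ into its countably many component intervals, observes that the proof is finished if some component has one endpoint mapped to $C_1$ and the other to $C_2$, and otherwise derives a contradiction by using uniform continuity of $\gamma_*$ on the compact interval to build two disjoint nonempty open sets covering $[a_*,b_*]$, violating connectedness. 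You instead pass to the scalar function $f(t)=\rho(\gamma_*(t),p_0)$, which satisfies $f(a_*)=R_1$, $f(b_*)=R_2$, take $s$ to be the last parameter with $f=R_1$ and $u$ the first subsequent parameter with $f=R_2$, and check via the intermediate value theorem that $R_1<f<R_2$ on $(s,u)$; the verification of the two exclusions ($f\le R_1$ contradicts maximality of $s$, $f\ge R_2$ contradicts minimality of $u$) is carried out correctly, and the resulting $\gamma_*|_{[s,u]}$ does belong to $\Delta(C_1,C_2,A)$ as defined in the paper, which is exactly the minorization \eqref{eqC}. Your approach is shorter and more elementary: it exploits the fact that $C_1$, $C_2$ and $A$ are level sets and a level band of the single continuous function $\rho(\cdot,p_0)$, which is precisely the extra structure the lemma's hypotheses add to Proposition \ref{pr10}, and it entirely avoids the countable decomposition and the uniform-continuity machinery. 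The paper's connectedness argument is in effect a hand-built topological substitute for the one-dimensional intermediate value theorem you apply directly.
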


Note that here, generally speaking, $C_1\cap T\ne E_1$ and  $C_2\cap
T\ne E_2$ as well as $\gamma_*$ in the proof of Proposition
\ref{pr10} is not in $R$.

\bigskip

\begin{proof}
First of all, note that by the continuity of $\gamma_*$ the set
$\omega :=\gamma_*^{-1}(R)$ is open in $[a_*,b_*]$  and $\omega$ is
the union of a countable collection of disjoint intervals
$(a_1,b_1)$, $(a_2,b_2)$, $\ldots $ with ends in $\Gamma
:=\gamma^{-1}_*(\partial R)$. If there is a pair $a_k$ and $b_k$ in
the different sets $\Gamma_i:=\gamma^{-1}_*(C_i)$, $i=1,2$,
$\Gamma=\Gamma_1\cup\Gamma_2$, $\Gamma_1\cap\Gamma_2=\varnothing$,
then the proof is complete.

Let us assume that such a pair is absent. Then the given collection
is split into 2 collections of disjoint intervals $(a^{\prime}_l,
b^{\prime}_l)$ and $(a^{\prime\prime}_l, b^{\prime\prime}_l)$ with
ends $a^{\prime}_l, b^{\prime}_l\in\Gamma_1$ and
$a^{\prime\prime}_l, b^{\prime\prime}_l\in\Gamma_2$, $l=1,2,\ldots$.
Set $\alpha_1=\bigcup\limits_l(a^{\prime}_l, b^{\prime}_l)$ and
$\alpha_2=\bigcup\limits_l(a^{\prime\prime}_l, b^{\prime\prime}_l)$.

Arguing by contradiction, it is easy to show that
$\gamma_*:[a_*,b_*]\to (M,\rho)$ is uniformly continuous because
$[a_*,b_*]$ is a compact space. Indeed, let us assume that there is
$\varepsilon>0$ and a sequence of pairs $a^*_n$ and
$b^*_n\in[a_*,b_*]$, $n=1,2,\ldots$, such that $|b^*_n-a^*_n|\to 0$
as $n\to\infty$ and simultaneously
$\rho(\gamma_*(a^*_n),\gamma_*(b^*_n))\ge\varepsilon$. However, by
compactness of $[a_*,b_*]$ there is a subsequence $a^*_{n_k}\to
a_0\in[a_*,b_*]$ and then also  $b^*_{n_k}\to a_0$ as $k\to\infty$.
Hence by the continuity of $ \gamma_*$ it should be
$\rho(\gamma_*(a^*_{n_k}),\gamma_*(a_0))\to 0$ as well as
$\rho(\gamma_*(b^*_{n_k}),\gamma_*(a_0))\to 0$ and then by the
triangle inequality also
$\rho(\gamma_*(a^*_{n_k}),\gamma_*(b^*_{n_k}))\to 0$ as
$k\to\infty$. The contradiction disproves the assumption.

Note that $b_l^{\prime}-a_l^{\prime}\to 0$ as $l\to\infty$ and by
the uniform continuity of $\gamma_*$ on $[a_*,b_*]$ we have that
$|\gamma_l^{\prime}|\to C_1$ in the sense that $$\sup\limits_{p\in
|\gamma_l^{\prime}|}\ \inf\limits_{q\in C_1}\rho(p,q)\ \to\ 0\ \ \
 \ \hbox{as}\ \ l\to\infty$$ where
$\gamma_l^{\prime}:=\gamma_*|_{[a_l^{\prime}, b_l^{\prime}]}$,
$l=1,2,\ldots$. Thus, there is $R_2^{\prime}\in(R_1,R_2)$ such that
the set $L_1:=\bigcup\limits_l |\gamma_l^{\prime}|$ lies outside of
$B_2:=\{ p\in M: \rho(p,p_0)>R_2^{\prime}\}$.

Arguing similarly, we obtain that there is
$R_1^{\prime}\in(R_1,R^{\prime}_2)$ such that the set
$L_2:=\bigcup\limits_l |\gamma_l^{\prime\prime}|$ lies outside of
$B_1:=\{ p\in M: \rho(p,p_0)<R_1^{\prime}\}$. Remark that the sets
$\beta_1:=\gamma_*^{-1}(B_1)$ and $\beta_2:=\gamma_*^{-1}(B_2)$ are
open in $[a_*,b_*]$ because $\gamma_*$ is continuous and by the
construction $\delta_1:=\alpha_1\cup\beta_1$ and
$\delta_2:=\alpha_2\cup\beta_2$ are open, mutually disjoint and
together cover the segment $[a_*,b_*]$. The latter contradicts to
connectedness of the segment and, thus, disproves the above
assumption. \end{proof} $\Box$

\bigskip

\section{The main lemma}

\begin{lemma}\label{lem1} {\it Let $\mathbb{S}$ be a Riemann surface, $D$ be a domain in $\,
\overline{\mathbb{S}}$ with $\partial D\subset\mathbb{S}$ and let
$\Gamma$ be a isolated component of $\partial D$. Then $\Gamma$ has
a neighborhood $U$ with a conformal mapping $H$ of $U^*:=U\cap D$
onto a ring $R=\{ z\in\Bbb C: 0\le r<|z|<1\}$ where $\gamma
:=\partial U^*\cap D$ is a closed Jordan curve, $$C(\gamma , H)=\{
z\in\Bbb C:|z|=1\}\ ,\ \ \ C(\Gamma , H)=\{ z\in\Bbb C:|z|=r\}$$ and
$r=0$ if and only if $\Gamma$ is degenerated to a point.
Furthermore, the mapping $H$ can be extended to a homeomorphism
$\tilde H$ of $\ \overline{U^*}_P$ onto $\overline R$.} \end{lemma}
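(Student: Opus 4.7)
The plan is to reduce the statement to the classical picture on a round annulus via the uniformization theorem for doubly connected Riemann surfaces, and then transfer the prime-end theory back through the resulting conformal map.

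First, using that $\Gamma$ is an isolated component of the closed set $\partial D\subset\mathbb{S}$ together with the local compactness of $\mathbb{S}$, I would pick an open neighborhood $V$ of $\Gamma$ in $\overline{\mathbb{S}}$ with $\overline V$ compact, $\overline V\cap(\partial D\setminus\Gamma)=\varnothing$, and such that the component $V^*$ of $V\cap D$ accumulating on $\Gamma$ is a doubly connected subdomain of $\mathbb{S}$, with one ideal boundary being $\Gamma$ and the other lying in $D$. Invoking the classification of genus-zero Riemann surfaces with two ideal boundary components, I obtain a conformal map $\varphi:V^*\to\{r_0<|z|<1\}$ for some $r_0\in[0,1)$, with $r_0=0$ corresponding precisely to the case that $\Gamma$ reduces to a point. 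Fixing $s\in(r_0,1)$, setting $\gamma:=\varphi^{-1}(\{|z|=s\})$, and taking $U^*$ to be the component of $V^*\setminus\gamma$ whose closure meets $\Gamma$, the rescaled map $H:=\varphi/s$ sends $U^*$ conformally onto $R=\{r<|z|<1\}$ with $r=r_0/s$; I put $U:=U^*\cup\Gamma$. The cluster-set equalities $C(\gamma,H)=\{|z|=1\}$ and $C(\Gamma,H)=\{|z|=r\}$ then follow from the properness of the homeomorphism $H$ together with the fact that the only ends of $U^*$ in $\overline{\mathbb{S}}$ are $\gamma$ and $\Gamma$.

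The principal difficulty is the last assertion: that $H$ extends to a homeomorphism $\tilde H$ of $\overline{U^*}_P$ onto $\overline R$. Since $R$ is bounded by two Jordan curves (with the inner one collapsing to $\{0\}$ in the puncture case), its prime ends are in the classical topological one-to-one correspondence with the points of $\partial R$. Being a conformal homeomorphism, $H$ carries cross-cuts of $U^*$ to cross-cuts of $R$, preserves the splitting and exhaustion structure of chains, respects chain equivalence, and hence induces a well-defined bijection of prime ends; basic neighborhoods $d^*$ of a prime end in the topology of $\overline{U^*}_P$ go over to basic neighborhoods of the corresponding boundary points of $\overline R$, which yields continuity of $\tilde H$ and of its inverse. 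The truly delicate point, and the obstacle I expect to spend most effort on, is surjectivity onto $\{|z|=r\}$: I must show that every point there is the image of some prime end of $U^*$ whose body lies in $\Gamma$. I would handle this by pulling back short arcs in $R$ ending at a prescribed point of $\{|z|=r\}$ to cross-cuts in $U^*$ and verifying the diameter condition (iii) via the spherical chains supplied by Remark \ref{rmk1} around the corresponding points of $\Gamma$ in conjunction with the local conformal distortion of $H^{-1}$ near the inner boundary.
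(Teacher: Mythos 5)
Your reduction to a round annulus follows the same route as the paper (isolated component, a finite--genus Koebe/Stoilow neighborhood, uniformization of the resulting doubly connected piece), and your recognition that surjectivity onto $\{|z|=r\}$ needs work is sound. But there is a genuine gap at the step you dismiss as routine: the claim that $H$, being a conformal homeomorphism, ``preserves the splitting and exhaustion structure of chains, respects chain equivalence, and hence induces a well-defined bijection of prime ends.'' Conditions (i) and (ii) in the definition of a chain are indeed topological and transfer trivially, but condition (iii) --- $\delta(\sigma_m)\to 0$ --- is a \emph{metric} condition and is not conformally invariant: $H$ is a homeomorphism only of the open domains, not uniformly continuous up to the boundary, so the images $H(\sigma_m)$ of a Carath\'eodory chain in $U^*$ are nested cross-cuts of $R$ whose diameters need not tend to zero (a priori $\bigcap\overline{H(d_m)}$ could be a nondegenerate arc of $C_r$), and conversely for pull-backs. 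This is precisely the crux the paper addresses: it verifies the modulus conditions (\ref{eq4}) and (\ref{eq5}) for chains in $U^*$ (using Proposition \ref{pr10}, Lemma \ref{lem10} and the spherical chains of Remark \ref{rmk1}), so that the prime ends of $U^*$ are prime ends in N\"akki's conformally invariant sense, then invokes N\"akki's Lemma 3.5 and Theorems 4.1--4.2 to identify these with Carath\'eodory prime ends of $R$ and with points of $\partial R$. Without some substitute for this modulus characterization your ``well-defined bijection'' is unsupported.

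Two smaller points. First, your assertion that $r_0=0$ corresponds precisely to degenerate $\Gamma$ is stated without argument; the nontrivial direction (nondegenerate $\Gamma$ forces $r>0$) requires the length--modulus estimate of Corollary \ref{corII}, as in the paper. Second, in your surjectivity sketch the delicate issue is not only the diameter condition on the pulled-back cross-cuts but also the disjointness condition (i): the preimages $H^{-1}(\sigma_m)$ may all terminate at one and the same point $p_0\in\Gamma$, so that their closures intersect; the paper rules this out by a separate extremal--length argument (a family of paths of positive modulus in a sector that would be forced to have zero modulus because all its paths end at $p_0$). Your plan as written does not address this.
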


\medskip

Here we use the notation of the {\bf cluster set} of the mapping $H$
for $B\subseteq \partial D$,
$$C(B,H):=\left\{\ z\in{{\Bbb
C}}:z=\lim\limits_{k\to\infty}H(p_k),\ p_k\to p\in B,\ p_k\in D\
\right\}.$$

\medskip

{\bf Proof.} By the Kerekjarto--Stoilow representation of $\Bbb S$,
$\Gamma$ has an open neighborhood $V$ in $\Bbb S$ of a finite genus
and we may assume that $\overline V$ is a compact subset of $\Bbb
S$, $V$ is connected and does not intersect $\partial D\setminus
\Gamma$ because $\Gamma$ is an isolated component of $\partial D$.
Thus, $V\cap D$ is a Riemann surface of finite genus with an
isolated boundary element $g$ corresponding to $\Gamma$. However, a
Riemann surface of finite genus has boundary elements only  of the
first kind, see, e.g., IV.II.6 in \cite{St}. Consequently, $\Gamma$
has a neighborhood $U^*$ from the side of $D$ of genus zero with a
closed Jordan curve $\gamma =\partial U^*\cap D$. The latter means
that $U^*$ is homeomorphic to a plane domain and, consequently, by
the general principle of Koebe, see e.g. Section II.3 in \cite{KAG},
$U^*$ is conformally equivalent to a plane domain $D^*$.  Note that
by the construction $U^*$ has two nondegenerate boundary components.
Hence there is a conformal mapping $H$ of $U^*$ onto a ring
$D^*=R=\{ z\in\Bbb C: 0\le  r<|z|<1\}$ with $C(\gamma , H)=C_1:=\{
z\in\Bbb C:|z|=1\}$ and $C(\Gamma , H)=C_r:=\{ z\in\Bbb C:|z|=r\}$,
see e.g. Proposition 2.5 in \cite{RS} or Proposition 13.5 in
\cite{MRSY}. Set $U=U^*\cup (V\setminus D)$.

If $\Gamma$ is not degenerated into a point, then $r\ne 0$. Indeed,
in the contrary case the images of the closed Jordan curves around
the origin in the punctured disk $\mathbb D_{\varepsilon}=\{
z\in\mathbb C:0<|z|<\varepsilon\}$ under the mapping $H^{-1}$ should
be contracted to $\Gamma$ as $\varepsilon\to 0$ and hence their
lengths are not less than $\delta :=\mbox{diam}\,\Gamma >0$ for
small enough $\varepsilon$. However, the latter contradicts to the
conformal invariance of the modulus because by Corollary \ref{corII}
the modulus of all such closed Jordan curves is equal to $\infty$.
Inversely, if $\Gamma$ is degenerated into a point $p_0\in\mathbb
S$, then it is obvious that $r=0$ because $p_0$ has arbitrarily
small neighborhoods that are conformally mapped onto the unit disk
in $\mathbb C$. Hence we omit the consideration of this trivial case
and restrict ourselves by the case $r>0$.

Now, by the condition (i) in the definition of prime ends and the
invariance of $M$ we have, for every chain $\{ \sigma_m\}$ in a
prime end $P$ associated with $\Gamma$ and localized in $U^*$, that
\begin{equation}\label{eq4}
M(\Delta(\sigma_m,\sigma_{m+1}, U^*))\ <\ \infty\ \ \ \ \ \ \forall\
m=1,2,\ldots \end{equation} Moreover, by Remark \ref{rmk1} $P$
contains a chain $\{\sigma_m\}$ lying on circles $S_m=S(p_0,r_m)=\{
p\in\Bbb S :\ \delta(p,p_0)=r_m\}$ with $p_0\in\partial D$ and
$r_m\to0$ as $m\to\infty$ for which and any continuum $C$ in $U^*$
\begin{equation}\label{eq5}
\lim\limits_{m\to\infty}\ M(\Delta(\sigma_m,C, U^*))\ \leq
\lim\limits_{m\to\infty}\ M(\Delta(\sigma_m,\sigma_{m_0}, U^*))\ =\
0\ .
\end{equation}
Indeed, for every continuum $C$ in $U^*$, there is $m_0$ such that
$C\subset D\setminus d_{m_0}$ and the closed ball
$B_0=B(p_0,r_{m_0})=\{ p\in\Bbb S :\ \delta(p,p_0)\le r_{m_0}\}$ is
compact and lies in a chart $U_0$ of $p_0$. Then $\Delta(\sigma_m,C,
U^*)\subseteq \Delta(\sigma_m,D\setminus d_{m_0}, U^*)$, by
Proposition \ref{pr10} $\Delta(\sigma_m,D\setminus d_{m_0},
U^*)>\Delta(\sigma_m,\sigma_{m_0}, U^*)$ and by Lemma \ref{lem2}
$\Delta(\sigma_m,\sigma_{m_0}, U^*)>\Delta(S_m,S_{m_0}, A)$ where
$A:=\{ p\in\Bbb S :\ r_m<\delta(p,p_0)< r_{m_0}\}$ belongs to the
chart $U_0$ of the point $p_0$. Note, $M(\Delta(S_m,S_{m_0}, A))\le
M(\Delta(S_m,S_{m_0}, U_0)) \to 0$ as $m\to\infty$ because $S_{m_0}$
is a compact set in $B_0\setminus\{ p_0\}$ and $S_m$ is contracted
to $p_0$ as $m\to\infty$, see also 7.5 in \cite{Va}. Finally, we
obtain (\ref{eq5}) by the minorization principle, see e.g.
\cite{Fu}, p. 178. Similarly, it is proved that prime ends
associated with $\gamma$ also satisfy conditions (\ref{eq4}) and
(\ref{eq5}).

Thus, the prime ends of $U^*$ in the sense (i)--(iii) and their
images in $R$ are the prime ends in the sense of Section 4 in
\cite{Na}. By Lemma 3.5 in \cite{Na} the prime ends of N\"akki in
$R$ coincide with prime ends of Caratheodory. Moreover, the N\"akki
prime ends in $R$ has a one-to-one correspondence with the points of
$\partial R$ whose extension to the mapping between $\overline R$
and $\overline R_P$ by the identity in $R$ is a homeomorphism with
respect to the topologies of $\overline R$ and $\overline R_P$ or
with respect to convergence of points and prime ends, respectively,
see Theorems 4.1 and 4.2 in \cite{Na}. Consequently, if $p_k$ is a
sequence of points in $U^*$ which is convergent to a prime end $P$
of $U^*$, then $H(p_k)$ is convergent to a unique point
$z_0\in\partial R$ that depends only on $P$.

Denote by $\tilde H$ the extension of $H$ to $ \overline{U^*}_P$. It
is clear by definitions of prime ends of N\"akki and Caratheodory as
classes of equivalence that $\tilde H(P_1)\ne\tilde H(P_2)$ for
every prime ends $P_1\ne P_2$ of the domain $U^*$. Let us consider
the metric $\rho(P, P^*):=|\tilde H(P)-\tilde H(P^*)|$ on the space
$\overline {U^*}_P$. It is obvious by definitions that $\rho(P_k,
P_0)\to 0$ implies that $P_k\to P_0$ as $k\to\infty $. The inverse
conclusion follows because of the mapping $\tilde H:
\overline{U^*}_P\to\overline R$ is continuous. Indeed, let $P_k\to
P_0$, $k=1,2,\ldots $, be a sequence in $\overline{U^*}_P$. It is
obvious, $\tilde H(P_k)\to \tilde H(P_0)$ for $P_0\in U^*$. If
$P_0\in E_{U^*}$, then we are able to choose $p_k\in U^*$ such that
$|\tilde H(P_k)-\tilde H(p_k)|<2^{-k}$, $k=1,2,\ldots $, and $p_k\to
P_0$ as $k\to\infty$. The latter implies that $\tilde
H(p_k)\to\tilde H(P_0)$ and then the former implies that $\tilde
H(P_k)\to \tilde H(P_0)$. Thus, the space $\overline {U^*}_P$ is
metrizable with the given metric $\rho$ and $\tilde H$ is an
isometric embedding of $\overline {U^*}_P$ in $\overline R$. By
construction $\tilde H(U^*)=R$ and, by Proposition 2.5 in \cite{RS}
or Proposition 13.5 in \cite{MRSY}, $\tilde H(E_{U^*})\subseteq
\partial R$. Let us show that $\tilde H(E_{U^*})=\partial R$.

For this goal, fixing $z_0\in\partial C_r$ and
$\varepsilon\in(0,1)$, consider the family $\frak F$ of all Jordan
arcs in the open disk $B_{\varepsilon}=B(z_0,\varepsilon):=\{
z\in\mathbb C: |z-z_0|<\varepsilon\}$ joining in $R$ the two open
arcs $A_1$ and $A_2$ of $C_r\cap B_{\varepsilon}\setminus\{ z_0\}$.
By the minorization principle, see e.g. \cite{Fu}, and the
invariance of $M$ (with respect to the conformal mapping consisting
of the composition of the inversion with respect to the unit circle
and the reflection with respect to the straight line $L_0$ passing
through the origin and the point $z_0$) we obtain from Corollary
\ref{corII} that the conformal modulus of the family $\frak F$ is
equal to $\infty$. By the invariance of the modulus under conformal
mappings we have that the modulus of the family $\frak
F_*=H^{-1}(\frak F)$ is also equal to $\infty$. Consequently, the
length of elements of $\frak F_*$ cannot be restricted from below
and, by arbitrariness of $\varepsilon$, there is a sequence of
mutually disjoint cross-cuts $\sigma_m\in \frak F$ of $R$ with
$\sigma_m(0)\in A_1$ and $\sigma_m(1)\in A_2$ that is contracted to
the point $z_0$ such that $\delta(\sigma^*_m)\to 0$ as $m\to\infty$
where $\sigma^*_m = H^{-1}(\sigma_m)$ and, moreover,
$\sigma^*_{m+1}\subset d^*_m$ where $d^*_m$ is the corresponding
component of $D$ generated by $\sigma^*_m$, $\partial d^*_m\cap
U^*=\sigma^*_m$ for all $m=1,2,\ldots$. Note that such rectifiable
$\sigma^*_m:(0,1)\to D$ have limits $p^{(1)}_m=\lim\limits_{t\to
+0}\sigma^*_m(t)$ and $p^{(2)}_m=\lim\limits_{t\to
1-0}\sigma^*_m(t)$ because $\overline{U^*}$ is a compact subset of
${\mathbb S}$, see e.g. Proposition I.9.3 in \cite{Bou}, cf. also
Theorem 1.3.2 in \cite{Va}, moreover, the points $p^{(1)}_m$ and
$p^{(2)}_m$ belongs to $\Gamma$, see e.g. Proposition 2.5 in
\cite{RS} or Proposition 13.5 in \cite{MRSY}.

Finally, it remains to show that
$\overline{\sigma^*_m}\cap\overline{\sigma^*_{m+1}}=\varnothing$,
passing in case of need to a suitable subchain of cross--cuts
$\sigma_m$ in $R$. First of all, by the above construction we may
assume that
$$\delta_m\ :=\ \inf\limits_{z\in\sigma_m}|z-z_0|\ >\ \delta_m^*\
:=\ \sup\limits_{z\in\sigma_{m+1}}|z-z_0|\ >\ 0\ \ \ \ \ \ \ \
\forall m=1,2,\ldots$$ and also that $\sigma^*_m$ is contracted to a
point $p_0\in\Gamma$ because $\Gamma$ is compact and
$\delta(\sigma^*_m)\to 0$. It is clear that the desired subchain
exists if $\sigma^*_m(0)\ne p_0\ne\sigma^*_m(1)$ for all large
enough $m$.

In the contrary case, it would exist a subchain
$\tilde\sigma_k:=\sigma_{m_k}$, $k=1,2,\ldots$, such that either
$\tilde\sigma_k^*(0)=p_0=\tilde\sigma_{k+1}^*(0)$ or
$\tilde\sigma_k^*(1)=p_0=\tilde\sigma_{k+1}^*(1)$ for all
$k=1,2,\ldots$, where $\tilde\sigma_k^*:=H^{-1}(\tilde\sigma_k)$,
$k=1,2,\ldots$. In the first case, consider the ring $A=\{
z\in\mathbb C: r_1<|z-z_0|<r_2\}$ with
$0<\delta_{m_k}^*<r_1<r_2<\delta_{m_k}$. As above, by the
minorization principle, the invariance of $M$ and Corollary
\ref{corI} the conformal modulus of the family $\tilde {\frak F}$ of
all paths in $A\cap R$ joining the open arc $A_0:=A\cap A_1$ of the
circle $C_r$ and the interval ${\rm I}_0:=A\cap L_0$ of the straight
line $L_0$ is not less than $\frac{2}{\pi}\log\frac{r_2}{r_1}>0$.
The modulus of the family $\tilde {\frak F}_*=H^{-1}(\tilde {\frak
F})$ should be the same. However, the modulus of $\tilde {\frak
F}_*$ is equal to zero because all paths in $\tilde {\frak F}_*$ are
ended at the point $p_0$.

Indeed, denote by $\rm I$ the maximal open interval of $L_0$
containing $\rm I_0$ and not intersecting $\tilde\sigma_k$ and
$\tilde\sigma_{k+1}$, and by $t_0$ and $t_*$ the parameter numbers
in $(0,1)$ corresponding to its ends on $\tilde\sigma_k$ and
$\tilde\sigma_{k+1}$. Then $H^{-1}(\rm I)$,
$\tilde\sigma^*_k((0,t_0])$, $\tilde\sigma^*_{k+1}((0,t_*])$ and the
point $p_0$ form a closed Jordan curve in $\overline {U^*}$ with the
only point on $\partial {U^*}$. Note that the corresponding Jordan
domain contains the family $\tilde {\frak F}_*$ of paths $\gamma$
that should be ended on $\Gamma$ and, consequently, at the point
$p_0$. The second possibility is similarly disproved.

\bigskip

Thus, $\tilde H$ is isometry between $\overline {U^*}_P$ with the
given metric $\rho$ and $\overline R$. $\Box$

\bigskip

\begin{remark}\label{rmk2} {\rm By the proof we have that $\overline {U^*}_P$ is a compact
space with the metric $\rho$. Moreover, it follows from the proof
that the spaces of prime ends by Caratheodory and N\"akki coincide
not only in the ring $R$ but also in $U^*$ because the N\"akki prime
ends are invariant under conformal mappings.

Furthermore, if $D$ be a domain in the Kerekjarto-Stoilow
compactification $\overline{\Bbb S}$ of a Riemann surface $\Bbb S$
and $\partial D$ is a set in $\Bbb S$ with a finite collection of
components, then their prime ends by Caratheodory and N\"akki also
coincide, the whole space $\overline D_P$ can be metrized through
the theory of pseudometric spaces, see e.g. Section 2.21.XV in
\cite{Ku}, and  $\overline D_P$ is compact.

Namely, let $\rho_0$ be one of the metrics on $\overline{\Bbb S}$
and let $\rho_1,\ldots , \rho_n$ be the above metrics on
$\overline{U_1^*}_P, \ldots , \overline{U_n^*}_P$ for the
corresponding components $\Gamma_1, \ldots , \Gamma_n$ of $\partial
D$. Here we may assume that the sets $\overline {U^*_j}$ are
mutually disjoint. Then $\rho^*_j:=\rho_j/(1+\rho_j)\leq 1$, $j=0,
1,\ldots , n$, are also metrics generating the same topologies on
$D_0:=D\setminus \cup U^*_j$, $\overline{U_1^*}_P, \ldots ,
\overline{U_n^*}_P$, correspondingly, see e.g. Section 2.21.V in
\cite{Ku}, and the topology of prime ends on $\overline D_P$ is
generated by the metric $\rho = \sum\limits_{j=0}\limits^n
2^{-(j+1)}\tilde\rho_j< 1$ where the pseudometrics $\tilde\rho_j$
are extensions of $\rho_j^*$ onto $\overline D_P$ by $1$, see e.g.
Remark 2 in point 2.21.XV of \cite{Ku}. Note that the space
$\overline D_P$ is compact because $\overline D_P=\cup\
\overline{U^*_j}_P\cup D_0$ where $D_0$ is a compact space as a
closed subset of the compact space $\overline{\Bbb S}$, see e.g.
Proposition I.9.3 in \cite{Bou}. } \end{remark}

\bigskip

\begin{corollary}\label{corIII}
{\it Under hypothesis of Lemma \ref{lem1}, the space of all prime
ends associated with a nondegenerate isolated component of $\partial
D$ is homeomorphic to a circle. } \end{corollary}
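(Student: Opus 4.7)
The plan is to exploit the homeomorphism $\tilde H : \overline{U^*}_P \to \overline R$ furnished by Lemma~\ref{lem1} and transfer the statement to the concrete model ring $R = \{z \in \Bbb C : r < |z| < 1\}$, in which the prime ends whose body lies in the inner boundary are literally the points of the circle $C_r = \{|z| = r\}$. Because $\Gamma$ is nondegenerate, Lemma~\ref{lem1} gives $r > 0$, so $C_r$ is a genuine circle homeomorphic to $S^1$; this is exactly where the nondegeneracy hypothesis enters.

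The first substantive step is to identify, as topological spaces, the set $E_\Gamma$ of prime ends of $D$ associated with $\Gamma$ with the set $E_\Gamma^*$ of prime ends of the subdomain $U^*$ whose body lies in $\Gamma$. Since $\Gamma$ is an isolated component of $\partial D$ and $U \cup \Gamma$ is a neighborhood of $\Gamma$ in $\overline{\Bbb S}$, for any prime end $P \in E_\Gamma$ one can extract, using the spherical chain provided by Remark~\ref{rmk1}, a chain $\{\sigma_m\}$ in $P$ all of whose cross-cuts and corresponding domains $d_m$ lie in $U^*$, with the endpoints of $\sigma_m$ on $\Gamma \subseteq \partial U^*$. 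Each such $\sigma_m$ is simultaneously a cross-cut of $D$ and of $U^*$ (it splits $U^*$ into $d_m$ and $U^* \setminus \overline{d_m}$), and the equivalence relation on chains is the same whether interpreted in $D$ or in $U^*$, since conditions (i)--(iii) depend only on the metric of $\Bbb S$ and on the shrinking domains $d_m \subset U^*$. Conversely, any chain of cross-cuts of $U^*$ whose body lies in $\Gamma$ has cross-cut endpoints on $\Gamma \subseteq \partial D$ and is therefore also a chain of cross-cuts of $D$ associated with $\Gamma$. This yields a natural bijection $\Phi : E_\Gamma \to E_\Gamma^*$.

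To verify that $\Phi$ is a homeomorphism with respect to the prime-end topologies on $E_\Gamma \subseteq \overline D_P$ and $E_\Gamma^* \subseteq \overline{U^*}_P$, note that bases of neighborhoods of $P$ and $\Phi(P)$ are indexed by the same shrinking subdomains $d \subset U^*$: the neighborhood $d^*$ in $\overline D_P$ consists of $d$ together with all prime ends of $D$ admitting a chain in $d$, while the neighborhood $d^*$ in $\overline{U^*}_P$ consists of $d$ together with all prime ends of $U^*$ admitting a chain in $d$. Restricted to $\Gamma$-associated prime ends the two sets coincide under $\Phi$, by the same chain-identification argument.

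Finally, by Lemma~\ref{lem1} together with the discussion in its proof, $\tilde H$ carries $E_\Gamma^*$ bijectively and homeomorphically onto the cluster set $C(\Gamma, H) = C_r$, since prime ends of $R$ correspond one-to-one to points of $\partial R$, and those with body in $C_r$ are precisely the points of $C_r$. Composing, $E_\Gamma \cong E_\Gamma^* \cong C_r \cong S^1$, which proves the corollary. The main technical obstacle in this plan is the verification in the second paragraph that a cross-cut of $D$ localized near $\Gamma$ really does split $U^*$ and that chain equivalence classes match up across the two domains; once that bookkeeping is settled, the passage through Lemma~\ref{lem1} is immediate.
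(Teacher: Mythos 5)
Your proposal is correct and follows exactly the route the paper intends: the corollary is stated as an immediate consequence of Lemma~\ref{lem1} (and Remark~\ref{rmk2}), namely transferring via the homeomorphism $\tilde H:\overline{U^*}_P\to\overline R$ to the model ring and using $r>0$ so that $C_r\cong S^1$. The localization step you spell out --- identifying the prime ends of $D$ associated with $\Gamma$ with those of $U^*$ via spherical chains from Remark~\ref{rmk1} --- is precisely the bookkeeping the paper leaves implicit (and uses again at the start of the proof of Lemma~\ref{lem3}), so no gap remains.
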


\bigskip

\section{On boundary behavior in prime ends of inverse maps}

The main base for extending inverse mappings is the following fact.

\begin{lemma}\label{lem3} {\it Let $\mathbb{S}$ and $\ \mathbb{S}^{\prime}$ be Riemann surfaces, $D$ and
$D^{\prime}$ be domains in $\, \overline{\mathbb{S}}$ and $\,
\overline{\mathbb{S}^{\prime}}$, $\partial D\subset\mathbb{S}$ and
$\partial D^{\prime}\subset\mathbb{S}^{\prime}$ have finite
collections of components, and let $f:D\to D^*$ be a homeomorphism
of finite distortion with $\ K_{f}\in L^{1}_{\rm loc}$. Then
\begin{equation}\label{eqC} C(P_1,f)\cap C(P_2,f)=\varnothing
\end{equation} for all prime ends $P_1\ne P_2$ in the domain $D$.}\end{lemma}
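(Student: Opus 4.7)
I argue by contradiction: assume that distinct prime ends $P_1 \ne P_2$ of $D$ satisfy $q \in C(P_1,f) \cap C(P_2,f)$ for some $q \in \mathbb{S}^{\prime}$, and derive a contradiction via modulus estimates.

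First I apply Lemma~\ref{lem1} \emph{twice} for localization. Each $P_i$ is associated with an isolated component $\Gamma_i$ of $\partial D$, and the ring-model extension identifies $P_i$ with a distinct point on the inner boundary of the model ring (distinct even when $\Gamma_1 = \Gamma_2$, since $P_1 \ne P_2$). Pulling back disjoint neighborhoods of these two points yields disjoint open subdomains $V_1, V_2 \subset D$, bounded in $D$ by disjoint Jordan cross-cuts $\gamma_1, \gamma_2$, such that every chain in $P_i$ is eventually contained in $V_i$. Since $\partial D^{\prime}$ also has finitely many components, $q$ lies on an isolated component of $\partial D^{\prime}$; Lemma~\ref{lem1} on the target then furnishes a chart at $q$ and $r_0 > 0$ in which $D^{\prime} \cap B(q,r_0)$ has the sector-like structure of a ring model accumulating at $q$.

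Next I construct spanning arcs on the target near $q$. The cluster hypothesis gives, for each $r \in (0, r_0)$, points $w_r^{(i)} \in f(V_i) \cap B(q,r)$; joining $f^{-1}(w_r^{(i)})$ in $V_i$ to a fixed reference $z_*^{(i)} \in V_i$ with $f(z_*^{(i)}) \notin \overline{B(q,r_0)}$ and extracting a subarc of the $f$-image inside $A(q; r, r_0)$ produces a continuum $\beta_r^{(i)} \subset f(V_i) \cap D^{\prime} \cap A(q; r, r_0)$ spanning from the circle $C_r$ to $C_{r_0}$. The $\beta_r^{(i)}$ are disjoint (since $f(V_1), f(V_2)$ are) and, by the sector structure at $q$, occupy distinct angular positions of the local ring model. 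Corollary~\ref{corII} then gives
\[
M\bigl(\Delta(\beta_r^{(1)},\beta_r^{(2)},\, D^{\prime} \cap A(q;r,r_0))\bigr) \;\ge\; \frac{1}{2\pi}\log\frac{r_0}{r} \;\xrightarrow{r \to 0^{+}}\; +\infty.
\]

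For the contradictory uniform upper bound I use the Poletsky-type modulus inequality. In dimension $2$, a homeomorphism of finite distortion with $K_f \in L^{1}_{\mathrm{loc}}$ lies in $W^{1,2}_{\mathrm{loc}}$ and satisfies $M(f\Gamma) \le \int_{D} K_f(z)\,\rho(z)^2\,dA(z)$ for every path family $\Gamma$ in $D$ and every Borel $\rho$ admissible for $\Gamma$. Set $\Gamma_r := f^{-1}\bigl(\Delta(\beta_r^{(1)},\beta_r^{(2)}, D^{\prime}\cap A(q;r,r_0))\bigr)$; every path in $\Gamma_r$ joins $V_1$ to $V_2$ in $D$, so $\Gamma_r \subset \Delta(V_1,V_2,D)$, and any $\rho$ admissible for the fixed family $\Delta(V_1,V_2,D)$ is admissible for every $\Gamma_r$. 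Choosing such a $\rho$ with compact support $K \subset D$ (possible since $\gamma_1, \gamma_2$ are disjoint compact Jordan arcs in $D$ that every path from $V_1$ to $V_2$ must cross), local integrability of $K_f$ gives $M(f\Gamma_r) \le \int_K K_f \rho^2\,dA =: C < \infty$ uniformly in $r$, contradicting the divergent lower bound above. The crucial technical hurdle is producing this divergent lower bound: the continua $\beta_r^{(1)},\beta_r^{(2)}$ must approach $q$ through genuinely different angular directions of a sector of $D^{\prime}$ at $q$, which is why Lemma~\ref{lem1} must also be invoked on the \emph{target} side; without the finite-components hypothesis on $\partial D^{\prime}$, the local geometry of $D^{\prime}$ at $q$ could funnel these paths through a narrow channel of uniformly bounded modulus, collapsing the argument.
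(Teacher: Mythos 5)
Your overall strategy coincides with the paper's: argue by contradiction, get a divergent lower bound for a modulus near the common cluster point on the target (the paper packages this as the QED property of the model ring $R$ obtained from Lemma \ref{lem1}), and contradict it with a uniform upper bound coming from the integrability of $K_f$ on the source. The lower-bound half of your argument is essentially sound once the target is reduced to the ring model. The problem is in the upper bound.

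The step ``choose $\rho$ admissible for $\Delta(V_1,V_2,D)$ with compact support $K\subset D$'' fails, and it fails precisely in the generic situation the lemma must handle, namely when $P_1$ and $P_2$ are associated with the \emph{same} boundary component $\Gamma$. The cross-cuts $\gamma_1,\gamma_2$ bounding $V_1,V_2$ are not compact subsets of $D$: by definition their endpoints lie on $\partial D$ (on $\Gamma$). Consequently a path from $V_1$ to $V_2$ may exit $V_1$ through $\gamma_1$ arbitrarily close to one of its endpoints, travel inside an arbitrarily thin neighborhood of $\Gamma$, and enter $V_2$ through $\gamma_2$ near $\partial D$; such a path has zero $\rho$-length for any $\rho$ supported in a fixed compact subset of $D$, so no compactly supported admissible $\rho$ exists and no uniform bound $M(f\Gamma_r)\le C$ follows. (The auxiliary claims that $K_f\in L^1_{\rm loc}$ puts $f$ in $W^{1,2}_{\rm loc}$ and yields the full Poletsky inequality for arbitrary admissible $\rho$ are also unjustified at this generality; the known equivalence is for the \emph{inverse} map, and the paper only uses a ring-type inequality.) The paper closes exactly this gap differently: by Remark \ref{rmk1} the chain in $P_1$ may be taken spherical, lying on circles $S(p_0,r_m^{(1)})$ about a point $p_0\in\partial D$; Proposition \ref{pr10} and Lemma \ref{lem10} then show that every path joining continua $K_1\subset d_m^{(1)}$ and $K_2\subset D\setminus d_m^{(1)}$ has a subpath crossing the full spherical annulus $A(p_0,R_1,R_2)$ with $R_1=r^{(1)}_{m+1}<R_2=r^{(1)}_m$. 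For that crossing family the \emph{radial} function $\xi\equiv 1/(R_2-R_1)$ is admissible without any compact-support requirement, and Lemma 3.1 of \cite{RV3} gives $M(f\Delta)\le\int_A K_f\,\xi^2\,dh<\infty$ because, after the localization of Lemma \ref{lem1} and Remark \ref{rmk2}, one may assume $K_f\in L^1(D)$ and $K_f\equiv 0$ outside $D$. Without this annulus-crossing device (or some substitute controlling paths that hug $\partial D$), your upper bound does not go through.
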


Here we use the notation of the {\bf cluster set} of the mapping $f$
at $P\in E_D$,
$$C(P,f):=\left\{\ P^{\prime}\in E_{D^{\prime}}: P^{\prime}=\lim\limits_{k\to\infty}f(p_k),\ p_k\to P,\
p_k\in D\ \right\}$$

As usual, we also assume here that the dilatation $K_{f}$
of the mapping $f$ is extended by zero outside of the domain $D$.

\medskip

\begin{proof} First of all note that
$\overline{\mathbb{S}}$ and $\overline{\mathbb{S}^{\prime}}$ are
metrizable spaces. Hence their compactness is equi\-va\-lent to
their sequential compactness, see e.g. Remark 41.I.3 in
\cite{Ku$_2$}, and, consequently, $\partial{D}$ and
$\partial{D}^{\prime}$ are compact subsets of ${\mathbb{S}}$ and
${\mathbb{S}^{\prime}}$, correspondingly, see e.g. Proposition I.9.3
in \cite{Bou}. Thus, by Lemma \ref{lem1},  Remarks \ref{rmk1} and
\ref{rmk2} we may assume that $\overline D$ is a compact set in
$\Bbb S$, $K_f\in L^1(D)$, $P_1$ and $P_2$ are associated with the
same component $\Gamma$ of $\partial D$ and $D^{\prime}$ is a ring
$R=\{ z\in\Bbb C: 0<r<|z|<1\}$ and
$$
A_k\ :=\ C(P_k, f)\ ,\ \ \ \ \ \ k\ =\ 1,\ 2
$$
are sets of points in the circle $C_r:=\{ z\in\Bbb C:|z|=r\}$,
$\partial D$ consists of 2 components: $\Gamma$ and a closed Jordan
curve $\gamma$, $f$ is extended to a homeomorphism of $D\cup\gamma$
onto $D^{\prime}\cup C_1$, $C(C_r,f^{-1})=\Gamma$, see also
Proposition 2.5 in \cite{RS} or Proposition 13.5 in \cite{MRSY}.
Note that the sets $A_k$ are continua, i.e. closed arcs of the
circle $C_r$, because
$$
A_k\ =\
\bigcap\limits_{m=1}\limits^{\infty}\overline{f\left(d^{(k)}_m\right)}\
,\ \ \ \ \ \ k\ =\ 1,\ 2\ ,
$$
where $d^{(k)}_m$ are domains corresponding to chains of cross--cuts
$\{\sigma^{(k)}_m\}$ in the prime ends $P_k$, $k=1,2$, see e.g.
I(9.12) in \cite{Wh} and also I.9.3 in \cite{Bou}. In addition, by
Remark \ref{rmk1} we may assume also that $\sigma^{(k)}_m$ are open
arcs of the circles $C^{(k)}_{m}:=\{ p\in\Bbb S:
h(p,p_k)=r^{(k)}_m\}$ on $\Bbb S$ with $p_k\in\partial D$ and
$r^{(k)}_m\to 0$ as $m\to\infty$, $k=1,2$.

Set $p_0=p_1$. By the definition of the topology of the prime ends
in the space $\overline D_P$, we have that  $d^{(1 )}_m\cap d^{(2
)}_m=\varnothing$ for all large enough $m$ because $P_1\ne P_2$. For
a such $m$, set $R_1=r^{(1)}_{m+1}<R_2=r^{(1)}_m$ and
$$
U_k=d^{(k)}_m\ , \ \ \  \Sigma_k=\sigma^{(k)}_m\ ,\ \ \ C_k=\{
p\in\Bbb S: h(p,p_0)=R_k\},\ \ \ k=1,2\ .
$$
Let $K_1$ and $K_2$ be arbitrary continua in $U_1$ and $U_2$,
correspondingly. Applying Proposition \ref{pr10} and Lemma
\ref{lem10} with $T=D$, $E_1=d^{(1)}_{m+1}$ and $E_2=D\setminus
d^{(1)}_m$, and taking into account the inclusion $\Delta(K_1,K_2,
D)\subset \Delta(E_1,E_2, D)$, we obtain that
\begin{equation}\label{eqM}\Delta(K_1,K_2, D)\
>\ \Delta(C_1,C_2, A)\ ,\ \ \  A:=\{ p\in\mathbb{S}:
R_1<h(p,p_0)<R_2 \}\ ,\end{equation} which means that any path
$\alpha :[a,b]\to\Bbb S$ joining $K_1$ and $K_2$ in $D$,
$\alpha(a)\in K_1$, $\alpha(b)\in K_2$ and $\alpha(t)\in D$,
$t\in(a,b)$, has a subpath joining $C_1$ and $C_2$ in $A$. Thus,
since $f$ is a homeomorphism, we have also that
\begin{equation}\label{eqFV}\Delta(fK_1,fK_2, fD)\
>\ \Delta(fC_1,fC_2, fA)\end{equation}
and by the minorization principle, see e.g. \cite{Fu}, p. 178, we
obtain that
\begin{equation}\label{eqMFV}M(\Delta(fK_1,fK_2, fD))\
\leq\ M(\Delta(fC_1,fC_2, fA))\ .\end{equation} So, by Lemma 3.1 in
\cite{RV3} we conclude that
\begin{equation}\label{eqESTIMATE} M(\Delta(fK_1,fK_2, fD))\
\leqslant\ \int\limits_{A}K_f(p)\cdot\ \xi^2(h(p,p_0))\
dh(p)\end{equation} for all measurable functions
$\xi:(R_1,R_2)\to[0,\infty]$ such that
\begin{equation}\label{eqOS1.9}\int\limits_{R_1}^{R_2}\xi(R)\ dR\geqslant\ 1\ .\end{equation}
In particular, for $\xi(R)\equiv 1/\delta$, $\delta = R_2-R_1>0$, we
get from here that
\begin{equation}\label{eqFINITE} M(\Delta(fK_1,fK_2, fD))\
\leqslant\ M_0\ :=\ \frac{1}{\delta}\int\limits_{D}K_f(p)\ dh(p)\ <\
\infty \ .\end{equation} Since $f$ is a homeomorphism,
(\ref{eqFINITE}) means that \begin{equation}\label{eqFINAL}
M(\Delta({\cal K}_1,{\cal K}_2, D^{\prime}))\ \leqslant\ M_0\ <\
\infty \end{equation} for all continua ${\cal K}_1$ and ${\cal K}_2$
in the domains $V_1=fU_1$ and $V_2=fU_2$, correspondingly.

Let us assume that $A_1\cap A_2\ne\varnothing$. Then by the
construction there is $p_0\in\partial R\cap\partial V_1\cap\partial
V_2$. However, the latter contradicts (\ref{eqFINAL}) because the
ring $R$ is a QED (quasiextremal distance) domains, see e.g. Theorem
3.2 in \cite{MRSY}, see also Theorem 10.12 in \cite{Va}. \end{proof}
$\Box$

\begin{theorem}\label{th1}
Let $\mathbb{S}$ and $\ \mathbb{S}^{\prime}$ be Riemann surfaces,
$D$ and $D^{\prime}$ be domains in $\, \overline{\mathbb{S}}$ and
$\, \overline{\mathbb{S}^{\prime}}$, correspondingly, $\partial
D\subset\mathbb{S}$ and $\ \partial
D^{\prime}\subset\mathbb{S}^{\prime}$ have finite collections of
nondegenerate components, and let $f:D\to D^{\prime}$ be a
homeomorphism of finite distortion with $\ K_{f}\in L^{1}_{\rm
loc}$. Then the inverse mapping $g=f^{-1}:D^{\prime}\to D$ can be
extended to a continuous mapping $\tilde g$ of
$\overline{D^{\prime}}_P$ onto $\overline{D}_P$.
\end{theorem}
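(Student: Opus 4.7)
The plan is to define $\tilde g$ on $E_{D'}$ as the unique cluster value of $g$, using Lemma~\ref{lem3} to guarantee uniqueness, and then to verify continuity by a standard diagonal argument on metrizable compacta. Throughout I will exploit Remark~\ref{rmk2}, which tells us that both $\overline{D}_P$ and $\overline{D^{\prime}}_P$ are compact metrizable spaces; fix metrics $\rho$ on $\overline{D}_P$ and $\rho'$ on $\overline{D^{\prime}}_P$ generating the prime end topologies.

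First I would fix $P'\in E_{D^{\prime}}$ and any sequence $p_k^{\prime}\in D^{\prime}$ with $p_k^{\prime}\to P'$. By compactness of $\overline{D}_P$, some subsequence of $g(p_k^{\prime})$ converges to a point $P\in\overline{D}_P$. I would rule out $P\in D$: otherwise continuity of $f$ on $D$ would give $p_k^{\prime}=f(g(p_k^{\prime}))\to f(P)\in D^{\prime}$ along that subsequence, forcing $P'=f(P)\in D^{\prime}$, contradicting $P'\in E_{D^{\prime}}$. Hence $P\in E_D$. Next I would prove that this limit is unique: if two subsequences gave different limits $P_1\ne P_2$ in $E_D$, then by the definition of the cluster set we would have $P'\in C(P_1,f)\cap C(P_2,f)$, directly contradicting Lemma~\ref{lem3}. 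Independence of the chosen sequence $p_k^{\prime}$ follows by interleaving two candidate sequences and re-applying this uniqueness. Thus $\tilde g(P'):=\lim_k g(p_k^{\prime})$ is a well-defined element of $E_D$, and together with $\tilde g|_{D^{\prime}}=g$ we obtain a map $\tilde g:\overline{D^{\prime}}_P\to\overline{D}_P$.

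For continuity at $P'\in\overline{D^{\prime}}_P$, take any sequence $P_n^{\prime}\to P'$ in $\overline{D^{\prime}}_P$. For each $n$ choose $p_n^{\prime}\in D^{\prime}$ (if $P_n^{\prime}\in E_{D^{\prime}}$; if $P_n^{\prime}\in D^{\prime}$ take $p_n^{\prime}=P_n^{\prime}$) with
\[
\rho'(p_n^{\prime},P_n^{\prime})<\tfrac{1}{n},\qquad \rho(g(p_n^{\prime}),\tilde g(P_n^{\prime}))<\tfrac{1}{n};
\]
the second inequality is possible by the very definition of $\tilde g(P_n^{\prime})$. Then $p_n^{\prime}\to P'$ in $\overline{D^{\prime}}_P$, so by the well-definedness established above $g(p_n^{\prime})\to \tilde g(P')$ in $\overline{D}_P$, and the triangle inequality yields $\tilde g(P_n^{\prime})\to \tilde g(P')$. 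This shows $\tilde g$ is sequentially, hence topologically, continuous.

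Surjectivity is a symmetric argument: given $P\in\overline{D}_P$, pick $p_k\in D$ with $p_k\to P$; a subsequence of $f(p_k)\in D^{\prime}$ converges in $\overline{D^{\prime}}_P$ to some $P'$, and the same ``$f$ is continuous on $D$'' argument forces $P'\in E_{D^{\prime}}$ whenever $P\in E_D$, after which $\tilde g(P')=P$ by construction. The main obstacle is the uniqueness step in paragraph two; the rest is soft topology once Lemma~\ref{lem3} is available, but applying that lemma cleanly requires careful bookkeeping of which space ($D$ versus $D^{\prime}$) the sequences and cluster sets live in, and using that $f\circ g$ is the identity on $D^{\prime}$ to translate convergence of $g(p_k^{\prime})$ in $\overline{D}_P$ into membership in the cluster set $C(P_i,f)\subseteq E_{D^{\prime}}$.
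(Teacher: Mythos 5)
Your proposal is correct and follows essentially the same route as the paper: compactness and metrizability of $\overline{D}_P$ and $\overline{D^{\prime}}_P$ from Remark \ref{rmk2}, uniqueness of subsequential limits via Lemma \ref{lem3}, exclusion of interior limits, and the same $2^{-n}$-approximation argument for continuity and the symmetric compactness argument for surjectivity. The only cosmetic difference is that you rule out limits in $D$ by continuity of $f$ where the paper invokes Proposition \ref{pr1}; both are valid.
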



\begin{proof} Recall that by Remark \ref{rmk2} the spaces $\overline D_P$ and
$\overline{D^{\prime}}_P$ are compact and metrizable with metrics
$\rho$ and $\rho^{\prime}$. Let a sequence $p_n\in D^{\prime}$
converges  as $n\to\infty$ to a prime end $P^{\prime}\in
E_{D^{\prime}}$. Then any subsequence of $p^*_n:=g(p_n)$ has a
convergent subsequence by compactness of $\overline D_P$. By Lemma
\ref{lem3} any such convergent subsequence should have the same
limit. Thus, the sequence $p^*_n$ is convergent, see e.g. Theorem 2
of Section 2.20.II in \cite{Ku}. Note that $p^*_n$ cannot converge
to an inner point of $D$ because $I(P)\subseteq\partial D$ by
Proposition \ref{pr1} and, consequently, $p_n$ is convergent to
$\partial D^{\prime}$, see e.g. Proposition 2.5 in \cite{RS} or
Proposition 13.5 in \cite{MRSY}. Thus, $E_{D^{\prime}}$ is mapped
into $E_D$ under this extension $\tilde g$ of $g$. In fact, $\tilde
g$ maps $E_{D^{\prime}}$ onto $E_D$ because $p_n=f(p_n^*)$ has a
convergent subsequence for every sequence $p_n^*\in D$ that is
convergent to a prime end $P$ of the domain $D$ because
$\overline{D^{\prime}}_P$ is compact. The map $\tilde g$ is
continuous. Indeed, let a sequence $P_n^{\prime}\in
\overline{D^{\prime}}_P$ be convergent to $P^{\prime}\in
\overline{D^{\prime}}_P$. Then there is a sequence $p_n\in
D^{\prime}$ such that $\rho^{\prime}(P^{\prime}_n, p_n)<2^{-n}$ and
$\rho(p^*_n, P_n^*)<2^{-n}$ where $p^*_n:=g(p_n)$, $P^*_n:=\tilde
g(P_n)$ and $P^*=\tilde g(P^{\prime})$. Then $p_n\to P^{\prime}$ and
by the above $p^{*}_n\to P^*$ as well as $P^{*}_n\to P^*$ as
$n\to\infty$.\end{proof} $\Box$

\bigskip

\section{Lemma on extension to boundary of direct mappings}

In contrast with the case of the inverse mappings, as it was already
established in the plane, no degree of integrability of the
dilatation leads to the extension to the boundary of direct mappings
with finite distortion, see the example in the proof of Proposition
6.3 in \cite{MRSY}. The nature of the corresponding conditions has a
much more refined character as the following lemma demonstrates.

\begin{lemma}\label{lem4} {\it Under the hypothesis of Theorem \ref{th1}, let
in addition
\begin{equation}\label{eqREFINED}
\int\limits_{R(p_0,\varepsilon, \varepsilon_0)}
K_f(p)\cdot\psi_{p_0,\varepsilon, \varepsilon_0}^2(h(p,p_0))\ dh(p)
= o\left(I_{p_0, \varepsilon_0}^2(\varepsilon)\right)\ \ \ \ \
\forall\ p_0\in\partial D
\end{equation} as $\varepsilon\to0$ for all
$\varepsilon_0<\delta(p_0)$ where $R(p_0,\varepsilon,
\varepsilon_0)=\{p\in \Bbb S:\varepsilon<h(p,p_0)<\varepsilon_0\}$
and $\psi_{p_0,\varepsilon, \varepsilon_0}(t): (0,\infty)\to
[0,\infty]$, $\varepsilon\in(0,\varepsilon_0)$, is a family of
measurable functions such that
$$0\ <\ I_{p_0, \varepsilon_0}(\varepsilon)\ :=\
\int\limits_{\varepsilon}^{\varepsilon_0}\psi_{p_0,\varepsilon,
\varepsilon_0}(t)\ dt\ <\ \infty\qquad\qquad\forall\
\varepsilon\in(0,\varepsilon_0)\ .$$ Then $f$ can be extended to a
continuous mapping $\tilde f$ of $\ \overline{D}_P$ onto $\
\overline{D^{\prime}}_P$.}
\end{lemma}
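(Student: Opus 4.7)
The plan is to define the extension by $\tilde f(p)=f(p)$ for $p\in D$ and, for each prime end $P\in E_D$ with a chain $\{\sigma_m\}$ and associated domains $d_m$, by the cluster set $C(P,f):=\bigcap_{m=1}^{\infty}\overline{f(d_m)}$, where the closure is taken in $\overline{D^{\prime}}_P$. By Lemma~\ref{lem1} and Remark~\ref{rmk2}, $\overline{D}_P$ and $\overline{D^{\prime}}_P$ are compact metrizable, so this nested intersection of continua is itself a continuum, and $C(P,f)\subseteq E_{D^{\prime}}$ because $f^{-1}$ sends interior points to interior points. The whole task then reduces to showing $C(P,f)$ is a single prime end. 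Once that is established, the continuity of $\tilde f$ follows by the same diagonal metric argument used for $\tilde g$ in the proof of Theorem~\ref{th1}, and surjectivity follows from Theorem~\ref{th1} itself applied to $g=f^{-1}$: for any $P^{\prime}\in E_{D^{\prime}}$ and $q_n\to P^{\prime}$, the points $g(q_n)$ converge to some $P\in E_D$ and $\tilde f(P)=\lim f(g(q_n))=P^{\prime}$.

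To localise the problem, I first observe that the prime ends attached to distinct components of $\partial D^{\prime}$ form pairwise clopen subsets of $E_{D^{\prime}}$ (a consequence of the block structure of the prime-end metric in Remark~\ref{rmk2} and the conformal ring model of Lemma~\ref{lem1}). Hence the connected continuum $C(P,f)$ lies in the prime ends of a single component $\Gamma^{\prime}\subseteq\partial D^{\prime}$. Applying Lemma~\ref{lem1} to $\Gamma^{\prime}$ produces a conformal $H\colon U^{*}\to R=\{z\in\mathbb{C}\colon s<|z|<1\}$ that extends to a homeomorphism $\overline{U^{*}}_P\to\overline R$ sending the prime ends of $\Gamma^{\prime}$ onto $C_s=\{|z|=s\}$. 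Consequently $H(C(P,f))$ is a connected subset of the circle $C_s$, i.e.\ an arc or a point, and what remains is to rule out a non-degenerate arc.

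Suppose $z_1\ne z_2\in H(C(P,f))$ and set $Q_i=H^{-1}(z_i)$. Fix a spherical chain $\{\sigma_m\}$ in $P$ lying on circles $S(p_0,r_m)$ with $r_m\to 0$ and $p_0\in\partial D$ (Remark~\ref{rmk1}). Since $Q_i\in\overline{f(d_m)}$ for every $m$, I choose $p^{(i)}_m\in d_m$ with $f(p^{(i)}_m)\to Q_i$, and join $p^{(1)}_m$ and $p^{(2)}_m$ inside the connected domain $d_m$ by a continuum $E_m$. Fix $\rho_0\in(s,1)$ and set $F:=f^{-1}(H^{-1}(C_{\rho_0}))$, a fixed continuum in $D$ which lies in $D\setminus\overline{d_{m_0}}$ for some $m_0$. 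Reproducing the argument of Lemma~\ref{lem3} via Proposition~\ref{pr10}, Lemma~\ref{lem10} and Lemma~3.1 of \cite{RV3} with the admissible weight $\xi=\psi_{p_0,r_m,r_{m_0}}/I_{p_0,r_{m_0}}(r_m)$ on $(r_m,r_{m_0})$ yields the upper bound
$$M(\Delta(f(E_m),f(F),D^{\prime}))\ \leq\ \int_{R(p_0,r_m,r_{m_0})}K_f(p)\,\xi^2(h(p,p_0))\,dh(p)\ \longrightarrow\ 0\ \ \ (m\to\infty)$$
by hypothesis~(\ref{eqREFINED}). For the matching lower bound, for $m$ large we have $f(E_m)\subset U^{*}$, so conformal invariance of the modulus and the minorisation principle give $M(\Delta(f(E_m),f(F),D^{\prime}))\geq M(\Delta(H(f(E_m)),C_{\rho_0},R))$. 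The radial projection $\pi(re^{i\theta})=se^{i\theta}$ maps the continuum $H(f(E_m))$ to a connected subset of $C_s$ containing the points $\pi(H(f(p^{(i)}_m)))\to z_i$, hence containing an arc of $C_s$ of length bounded below by a fixed $\ell>0$ for all large $m$. A Cauchy--Schwarz estimate along the radial segments from $H(f(E_m))$ to $C_{\rho_0}$ then produces $M(\Delta(H(f(E_m)),C_{\rho_0},R))\geq \ell/\log(\rho_0/s)>0$, contradicting the upper bound. Hence $z_1=z_2$.

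The main obstacle is this last lower modular estimate: converting the merely topological distinctness of two prime ends on a single component of $\partial D^{\prime}$ into a uniform-in-$m$ positive lower bound on the modulus of a curve family in $D^{\prime}$. The ring model of Lemma~\ref{lem1} paired with the radial-projection argument supplies such a bound, and combined with the vanishing upper bound from hypothesis~(\ref{eqREFINED}) it forces $C(P,f)$ to be a single prime end; the continuity and surjectivity of $\tilde f\colon\overline{D}_P\to\overline{D^{\prime}}_P$ then follow by the routine metric-topological arguments indicated above.
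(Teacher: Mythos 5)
Your proposal is correct and follows essentially the same route as the paper: reduce to the ring model of Lemma \ref{lem1}, assume the cluster set $C(P,f)$ contains two distinct points, and derive a contradiction between the vanishing upper modulus bound coming from condition (\ref{eqREFINED}) via Proposition \ref{pr10}, Lemma \ref{lem10} and Lemma 3.1 of \cite{RV3}, and a positive lower modulus bound supplied by the geometry of the ring, with surjectivity and continuity handled by the same compactness and diagonal metric arguments. The only real difference is that where the paper simply invokes the strong accessibility of boundary points of the ring $R$ for the lower bound, you prove that bound by hand with the radial projection and the length--area (Cauchy--Schwarz) estimate of Corollary \ref{corI}, which is a legitimate, slightly more self-contained justification of the same step.
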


We assume here that the function $K_f$ is extended by zero outside
of $D$.

\medskip

\begin{proof} By and Lemma \ref{lem1}, Remarks \ref{rmk1} and \ref{rmk2}, arguing as in the
beginning of the proof of Lemma \ref{lem3}, we may assume that
$\overline D$ is a compact set in $\Bbb S$, $\partial D$ consists of
2 components: a closed Jordan curve $\gamma$ and one more
non\-de\-ge\-ne\-ra\-te component $\Gamma$, $D^{\prime}$ is a ring
$R=\{ z\in\Bbb C: 0<r<|z|<1\}$,
$\overline{D^{\prime}}_P=\overline{R}$,
$$C(\Gamma , f)=C_r:=\{ z\in\Bbb C: |z|=r\},\ \ \ C(\gamma , f)=C_*:=\{
z\in\Bbb C: |z|=1\}$$ and that $f$ is extended to a homeomorphism of
$D\cup\gamma$ onto $D^{\prime}\cup C_*$.

Let us first prove that the set $L:=C(P,f)$ consists of a single
point of $C_r$ for a prime end $P$ of the domain $D$ associated with
$\Gamma$. Note that $L\neq\varnothing$ by compactness of the set
$\overline{R}$ and, moreover, $L\subseteq C_r$ by Proposition
\ref{pr1}.


Let us assume that there is at least two points $\zeta_0$ and
$\zeta_*\in L$. Set $U=\{ \zeta\in\Bbb C: |\zeta -\zeta_0|<\rho_0\}$
where $0<\rho_0<|\zeta_*-\zeta_0|$.

\medskip

Let $\sigma_k$, $k=1,2,\ldots\,$, be a chain in the prime end $P$
from Remark \ref{rmk1} lying on the circles $S_k:=\{ p\in\Bbb S:
h(p,p_0)=r_k\}$ where $p_0 \in\Gamma$ and $r_k\to 0$ as
$k\to\infty$. Let $d_k$ be the domains associated with $\sigma_k$.
Then there exist points $\zeta_k$ and $\zeta^*_k$ in the domains
$d_{k}^{\prime}=f(d_{k})\subset R$ such that
$|\zeta_0-\zeta_k|<\rho_0$ and $|\zeta_0-\zeta^*_k|>\rho_0$ and,
moreover, $\zeta_k\to \zeta_0$ and $\zeta^*_k\to \zeta_*$ as
$k\to\infty$. Let $\gamma_k$ be paths joining $\zeta_k$ and
$\zeta^*_k$ in $d_{k}^{\prime}$. Note that by the construction
$\partial U\cap \gamma_k\neq\varnothing$, $k=1,2,\ldots $.

By the condition of strong accessibility of the point $\zeta_0$ in
the ring $R$, there is a continuum $E\subset R$ and a number
$\delta>0$ such that
\begin{equation}\label{delta} M(\Delta(E,\gamma_k;R))\ \geqslant\ \delta\end{equation}
for all large enough $k$. Note that $C=f^{-1}(E)$ is a compact
subset of $D$ and hence $h(p_0,C))>0$. Let
$\varepsilon_0\in(0,\delta_0)$ where $\delta_0:=\min\, (\delta(p_0),
h(p_0,C))$. Without loss of generality, we may assume that
$r_k<\varepsilon_0$ and that (\ref{delta}) holds for all
$k=1,2,\ldots$.

Let $\Gamma_{m}$ be the family of paths joining the circle
$S_{0}:=\{ p\in\Bbb S: h(p,p_0)=\varepsilon_0\}$ and ${\sigma_m}$,
$m=1,2,\ldots$,  in the intersection of $D\setminus d_m$ and the
ring $R_{m}:=\{ p\in\Bbb S: r_m < h(p,p_0) < \varepsilon_0\}$.
Applying Proposition \ref{pr10} and Lemma \ref{lem10} with $T=D$,
$E_1=d_m$ and $E_2=$ $B_0:=\{ p\in\Bbb S: h(p,p_0) >
\varepsilon_0\}$, and taking into account the inclusion
$\Delta(C,C_k,D)\subset \Delta(E_1,E_2,D)=\Delta(B_0,d_m,D)$ where
$C_k=f^{-1}(\gamma_k)$, we have that $\Delta(C,C_k,D)>\Gamma_m$ for
all $k\geqslant m$ because by the construction $C_k\subset
d_k\subset d_m$. Thus, since $f$ is a homeomorphism, we have also
that $\Delta(E,\gamma_k,D)>f\Gamma_m$ for all $k\geqslant m$, and by
the principle of minorization, see e.g. \cite{Fu}, p. 178, we obtain
that $M(f(\Gamma_{m}))\geqslant\delta$ for all $m=1,2,\ldots$.

On the other hand, every function $\xi(t)=\xi_m(t):=
\psi_{p_0,r_m,\varepsilon_0}(t)/I_{p_0,\varepsilon_0}(r_m)$,
$m=1,2,\ldots$, satisfies the condition (\ref{eqOS1.9}) and by Lemma
3.1 in \cite{RV3}
$$M(f\Gamma_m)\ \leqslant\
\int\limits_{R_m} K_f(p)\cdot\xi_{m}^2(h(p,p_0))\ dh(p)\ ,
$$
i.e., $M(f\Gamma_m)\to 0$ as $m\to\infty$ in view of
(\ref{eqREFINED}).


The obtained contradiction disproves the assumption that the cluster
set $C(P,f)$ consists of more than one point.


Thus, we have the extension $\tilde f$ of $f$ to $\overline{D}_P$
such that $\tilde f(E_D)\subseteq E_{D^{\prime}}$. In fact, $\tilde
f(E_D)= E_{D^{\prime}}$. Indeed, if $\zeta_0\in D^{\prime}$, then
there is a sequence $\zeta_n$ in $D^{\prime}$ that is convergent to
$\zeta_0$. We may assume with no loss of generality that
$f^{-1}(\zeta_n)\to P_0\in \overline{D}_P$ because $\overline{D}_P$
is compact, see Remark \ref{rmk2}. Hence $\zeta_0\in E_D$ because
$\zeta_0\notin D$, see e.g. Proposition 2.5 in \cite{RS} or
Proposition 13.5 in \cite{MRSY}.

Finally, let us show that the extended mapping $\tilde
f:\overline{D}_P\to\overline{D^{\prime}}_P$ is continuous. Indeed,
let $P_n\to P_0$ in $\overline{D}_P$. The statement is obvious for
$P_0\in D$. If $P_0\in E_D$, then by the last item we are able to
choose $P^*_n\in D$ such that $\rho(P_n,P^*_n)<2^{-n}$ and
$\rho^{\prime}(\tilde f(P_n),\tilde f(P^*_n))<2^{-n}$ where $\rho$
and $\rho^{\prime}$ are some metrics on $\overline{D}_P$ and
$\overline{D^{\prime}}_P$, correspondingly, see Remark \ref{rmk2}.
Note that by the first part of the proof $f(P^*_n)\to f(P_0)$
because $P^*_n\to P_0$. Consequently, $\tilde f(P_n)\to \tilde
f(P_0)$, too. \end{proof} $\Box$

\begin{remark}\label{rmk3} {\rm Note that condition (\ref{eqREFINED}) holds, in
particular, if
\begin{equation}\label{eqOSKRSS100a} \int\limits_{D(p_0,
\varepsilon_0)}K_f(p)\cdot\psi^2 (h(p,p_0))\ dh(p)\ <\
\infty\qquad\qquad \forall\ p_0\in\partial D\end{equation} where
$D(p_0,\varepsilon_0)=\{p\in \Bbb S: h(p,p_0)<\varepsilon_0\}$ and
where $\psi(t): (0,\infty)\to [0,\infty]$ is a locally integrable
function such that $I_{p_0,\varepsilon_0}(\varepsilon)\to\infty$ as
$\varepsilon\to0$. In other words, for the extendability of $f$ to a
continuous mapping of $\overline{D}_P$ onto
$\overline{D^{\prime}}_P$, it suffices for the integrals in
(\ref{eqOSKRSS100a}) to be convergent for some nonnegative function
$\psi(t)$ that is locally integrable on $(0,\infty)$ but that has a
non-integrable singularity at zero.}
\end{remark}


\section{On the homeomorphic extension to the boundary}


Combining Lemma \ref{lem4} and Theorem \ref{th1}, we obtain the
significant conclusion:

\begin{lemma}\label{lem5} {\it $\ $  Under $\ $ the\ hypothesis$\ $ of\
Lemma\ \ref{lem4},$\ $ the\ homeomorphism  $f:D\to D^{\prime}$ can
be extended to a homeomorphism $\tilde f:
\overline{D}_P\to\overline{D^{\prime}}_P$.}\end{lemma}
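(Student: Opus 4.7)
My plan is to obtain the homeomorphism by gluing together the two one-sided continuous extensions already produced in the paper. Concretely, Lemma~\ref{lem4} yields a continuous surjection $\tilde f:\overline{D}_P\to\overline{D^{\prime}}_P$ which coincides with $f$ on $D$ and sends $E_D$ into $E_{D^{\prime}}$; Theorem~\ref{th1} (applied to the same $f$, whose inverse $g=f^{-1}$ is the mapping whose extension is asserted there) yields a continuous surjection $\tilde g:\overline{D^{\prime}}_P\to\overline{D}_P$ agreeing with $g$ on $D^{\prime}$ and sending $E_{D^{\prime}}$ into $E_D$. The whole task is then to check that $\tilde g=\tilde f^{-1}$.

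The key observation is that by Remark~\ref{rmk2} both spaces $\overline D_P$ and $\overline{D^{\prime}}_P$ are compact and metrizable (in particular Hausdorff); hence any continuous bijection between them is automatically a homeomorphism. So it suffices to prove that $\tilde f$ is injective. On the open interior this is clear since $\tilde f|_D=f$ is a homeomorphism; between an interior point and a prime end injectivity is immediate because $\tilde f$ sends $D$ to $D^{\prime}$ and $E_D$ into $E_{D^{\prime}}$. The only nontrivial case is two distinct prime ends $P_1\ne P_2$ of $D$. Here I would invoke Lemma~\ref{lem3}: since $f$ satisfies the hypotheses there, the cluster sets $C(P_1,f)$ and $C(P_2,f)$ are disjoint. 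By construction of $\tilde f$ (it is obtained as the limit along any sequence $p_k\to P_i$), we have $\tilde f(P_i)\in C(P_i,f)$ for $i=1,2$, so $\tilde f(P_1)\ne\tilde f(P_2)$.

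Once injectivity is in hand, one can either quote the compact-to-Hausdorff closed-map lemma or verify the two inverse identities by hand: for $P\in\overline D_P$ pick $p_n\in D$ with $p_n\to P$ (possible by the topology of prime ends on $\overline D_P$); then $\tilde g(\tilde f(p_n))=g(f(p_n))=p_n\to P$, while continuity of $\tilde g\circ\tilde f$ gives $\tilde g(\tilde f(p_n))\to\tilde g(\tilde f(P))$, forcing $\tilde g(\tilde f(P))=P$ by uniqueness of limits in the metrizable space $\overline D_P$. The symmetric argument gives $\tilde f\circ\tilde g=\mathrm{id}$ on $\overline{D^{\prime}}_P$.

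The main (and essentially the only) obstacle is really bookkeeping: making sure that the extension $\tilde f$ coming from Lemma~\ref{lem4} honestly lands in $E_{D^{\prime}}$ on prime ends (so that comparison with the output of Theorem~\ref{th1} on $E_{D^{\prime}}$ is legitimate), and that the images $\tilde f(P_i)$ do lie in the cluster sets $C(P_i,f)$ so that Lemma~\ref{lem3} can be applied to pin down injectivity. Both points are built into the constructions of Lemmas~\ref{lem3}--\ref{lem4} and Remark~\ref{rmk2}, so no new modulus estimate is needed; the argument is purely point-set-topological once those results are cited.
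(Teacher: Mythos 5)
Your proposal is correct and follows essentially the same route as the paper: injectivity of the extension $\tilde f$ from Lemma \ref{lem4} is deduced from the disjointness of cluster sets in Lemma \ref{lem3}, and the continuity of the inverse is supplied by identifying $\tilde f^{-1}$ with the map $\tilde g$ of Theorem \ref{th1} (or, equivalently in your version, by the compact-to-Hausdorff argument available via Remark \ref{rmk2}). The extra bookkeeping you spell out is consistent with, and slightly more detailed than, the paper's own argument.
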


\begin{proof} Indeed, by Lemma \ref{lem3} the mapping $\tilde f:
\overline{D}_P\ \to \overline{D^{\prime}}_P$ from Lemma \ref{lem4}
is injective and hence it has the well defined inverse mapping
$\tilde f^{-1}:\overline{D^{\prime}}_P\ \to \overline{D}_P$ and the
latter coincides with the mapping $\tilde g:\overline{D^{\prime}}_P\
\to \overline{D}_P$ from Theorem \ref{th1} because a limit under a
metric convergence is unique. The continuity of the mappings $\tilde
g$ and $\tilde f$ follows from Theorem \ref{th1} and Lemma
\ref{lem4}, respectively. \end{proof} $\Box$

\bigskip

We assume everywhere in this section that the function $K_f$ is
extended by zero outside of $D$.


\begin{theorem}\label{th2} {\it Under the hypothesis of
Theorem \ref{th1}, let in addition
\begin{equation}\label{e:6.4dc}\int\limits_{0}^{\varepsilon_0}
\frac{dr}{||K_{f}||(p_0,r)}\ =\ \infty\qquad\qquad \forall\
p_0\in\partial D,\ \ \ \ \varepsilon_0<\delta(p_0) \end{equation}
where
\begin{equation}\label{e:6.6ch} ||K_{f}||(p_0,r)\ :=\int\limits_{
S({p_0},r)}K_f(p)\ ds_h(p)\ .\end{equation} Then $f$ can be extended
to a homeomorphism of $\ \overline{D}_P$ onto
$\overline{D^{\prime}}_P$.}
\end{theorem}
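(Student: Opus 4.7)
The plan is to reduce Theorem \ref{th2} to Lemma \ref{lem5} by showing that the divergence condition (\ref{e:6.4dc}) is a special case of the refined integral condition (\ref{eqREFINED}), via the canonical choice of the auxiliary family $\psi_{p_0,\varepsilon,\varepsilon_0}$.

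First I would fix $p_0\in\partial D$ and $\varepsilon_0<\delta(p_0)$ and define
$$\psi_{p_0,\varepsilon,\varepsilon_0}(t)\ :=\ \begin{cases} 1/\|K_f\|(p_0,t), & t\in(\varepsilon,\varepsilon_0),\\ 0, & \text{otherwise,}\end{cases}$$
with the convention $1/0=\infty$ and $1/\infty=0$. Then
$$I_{p_0,\varepsilon_0}(\varepsilon)\ =\ \int_\varepsilon^{\varepsilon_0}\frac{dt}{\|K_f\|(p_0,t)}\ ,$$
which is finite for every $\varepsilon\in(0,\varepsilon_0)$ (because the integrand is locally bounded away from the origin by local integrability of $K_f$) and, by hypothesis (\ref{e:6.4dc}), tends to $\infty$ as $\varepsilon\to 0$. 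In particular $I_{p_0,\varepsilon_0}(\varepsilon)>0$, so this $\psi$ is admissible in Lemma \ref{lem4}.

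Next I would verify (\ref{eqREFINED}) by a Fubini/co-area computation on the Riemann surface in the geodesic coordinates around $p_0$: writing $dh(p)=ds_h(p)\,dt$ where $t=h(p,p_0)$ and $ds_h$ is the arclength on the sphere $S(p_0,t)$, one gets
$$\int_{R(p_0,\varepsilon,\varepsilon_0)}K_f(p)\,\psi_{p_0,\varepsilon,\varepsilon_0}^2(h(p,p_0))\,dh(p)\ =\ \int_\varepsilon^{\varepsilon_0}\!\frac{1}{\|K_f\|^2(p_0,t)}\!\int_{S(p_0,t)}\!K_f(p)\,ds_h(p)\,dt\ =\ \int_\varepsilon^{\varepsilon_0}\frac{dt}{\|K_f\|(p_0,t)}\ =\ I_{p_0,\varepsilon_0}(\varepsilon).$$
Hence the ratio of this integral to $I_{p_0,\varepsilon_0}^2(\varepsilon)$ equals $1/I_{p_0,\varepsilon_0}(\varepsilon)$, which tends to $0$ as $\varepsilon\to 0$ by the divergence assumption. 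So (\ref{eqREFINED}) holds with the indicated $\psi_{p_0,\varepsilon,\varepsilon_0}$.

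Finally, since the hypothesis of Lemma \ref{lem4} is satisfied, Lemma \ref{lem5} immediately yields a homeomorphic extension $\tilde f:\overline D_P\to \overline{D^{\prime}}_P$ of $f$, completing the proof. The only non-routine point here is the co-area identity on $\mathbb S$ used to pass from the area integral to the iterated integral over geodesic spheres; this is the standard surface co-area/Fubini formula in the local chart around $p_0$ (where $h$ is the chart metric on the compact neighborhood of $p_0$), and it is the same device already exploited in the authors' earlier papers \cite{RV1}--\cite{RV4} and in the proof of Lemma \ref{lem4}, so no new work is required.
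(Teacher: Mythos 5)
Your proposal is correct and follows essentially the same route as the paper: the authors also take $\psi_{p_0,\varepsilon_0}(t)=1/\Vert K_f\Vert(p_0,t)$ on $(0,\varepsilon_0)$ and $0$ elsewhere, apply Fubini to identify the left side of (\ref{eqREFINED}) with $I_{p_0,\varepsilon_0}(\varepsilon)=o\bigl(I_{p_0,\varepsilon_0}^2(\varepsilon)\bigr)$, and conclude via Lemma \ref{lem4}. Your explicit appeal to Lemma \ref{lem5} for the homeomorphic (rather than merely continuous) extension is in fact the more precise citation, since the paper's closing reference to Lemma \ref{lem4} alone implicitly relies on that combination.
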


Here $S(p_0,r)$ denotes the circle $\{p\in \Bbb S: h(p,p_0)=r\}$.

\bigskip

\begin{proof} Indeed, for the functions
\begin{equation}\label{3.21}\psi_{p_0, \varepsilon_0}(t)\ :=\ \left \{\begin{array}{lr}
1/||K_{f}||(p_0,t),\quad & \ t\in (0,\varepsilon_0),
\\ 0,  & \ t\in [\varepsilon_0,\infty),\end{array}\right.\end{equation} we
have by the Fubini theorem that
\begin{equation}\label{3.22}\int\limits_{R(p_0,\varepsilon, \varepsilon_0)} K_{f}(p)\cdot\psi_{p_0,\varepsilon_0}^2(h(p,p_0)\
dh(p)\ =\ \int\limits_{\varepsilon}^{\varepsilon_0}
\frac{dr}{||K_{f}||(p_0,r)}
\end{equation} where $R(p_0,\varepsilon, \varepsilon_0)$ denotes the ring $\{p\in \Bbb S:\varepsilon<h(p,p_0)<\varepsilon_0\}$ and,
consequently, condition (\ref{eqREFINED}) holds by (\ref{e:6.4dc})
for all $p_0\in\partial D$ and
$\varepsilon_0\in(0,\varepsilon(p_0))$.

\medskip

Here we have used the standard conventions in the integral theory
that $a/\infty=0$ for $a\neq\infty$ and $0\cdot\infty=0$, see, e.g.,
Section I.3 in \cite{Sa}.

\bigskip

Thus, Theorem \ref{th2} follows immediately from Lemma \ref{lem4}.
\end{proof} $\Box$


\begin{corollary}\label{cor1} {\it In particular, the conclusion of Theorem
\ref{th2} holds if
\begin{equation}\label{eqOSKRSS100d}k_{p_0}(r)=O\left(\log{\frac1r}\right)\qquad\qquad  \forall\
p_0\in\partial D \end{equation} as $r\to0$ where $k_{p_0}(r)$ is the
average of $K_f$ over the infinitesimal circle $S(p_0,r)$. }
\end{corollary}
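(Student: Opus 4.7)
The plan is to reduce the corollary to a direct application of Theorem \ref{th2} by estimating $\|K_f\|(p_0,r)$ in terms of the circular average $k_{p_0}(r)$.

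First I would relate the two quantities by the identity
$$\|K_f\|(p_0,r)\ =\ k_{p_0}(r)\cdot L(r),$$
where $L(r)$ is the $h$-length of the circle $S(p_0,r)$. Working in a local chart around $p_0\in\partial D$ in which the metric $h$ is conformal to the Euclidean one (which we may always arrange on a Riemann surface, at least in a neighborhood of every point, since $\partial D$ has a compact neighborhood in $\mathbb{S}$), we have $L(r)\le Cr$ for all sufficiently small $r>0$, with a constant $C$ depending only on $p_0$ and the chosen chart.

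Next I would plug this into the hypothesis. By assumption $k_{p_0}(r)\le A\log\frac1r$ for some $A>0$ and all sufficiently small $r$, so
$$\|K_f\|(p_0,r)\ \le\ AC\,r\,\log\tfrac1r\qquad\text{as } r\to 0.$$
Consequently, for some $\varepsilon_0<\delta(p_0)$ small enough,
$$\int\limits_{0}^{\varepsilon_0}\frac{dr}{\|K_f\|(p_0,r)}\ \ge\ \frac{1}{AC}\int\limits_{0}^{\varepsilon_0}\frac{dr}{r\log\frac1r}.$$
The substitution $u=\log\frac1r$, $du=-dr/r$, gives
$$\int\limits_{0}^{\varepsilon_0}\frac{dr}{r\log\frac1r}\ =\ \int\limits_{\log\frac{1}{\varepsilon_0}}^{\infty}\frac{du}{u}\ =\ +\infty,$$
so the divergence condition \eqref{e:6.4dc} of Theorem \ref{th2} is satisfied at every $p_0\in\partial D$. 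The conclusion then follows immediately from Theorem \ref{th2}.

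The only delicate point is the first step, namely the bound $L(r)\le Cr$ for the length of the circle $S(p_0,r)$; everything else is a routine substitution and an appeal to the previous theorem. This bound is unproblematic because locally on the Riemann surface the metric agreed with the topology can be taken to be (or is comparable to) the Euclidean metric of a conformal chart, and $p_0$ has a compact neighborhood on which the comparison constants are uniform. No other obstacles are expected.
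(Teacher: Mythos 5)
Your proposal is correct and follows exactly the route the paper intends (the corollary is stated as an immediate consequence of Theorem \ref{th2}): since $\|K_f\|(p_0,r)=k_{p_0}(r)\,L(r)$ with $L(r)=O(r)$ for the length of $S(p_0,r)$ in any of the standard metrics on the surface, the hypothesis $k_{p_0}(r)=O(\log\frac1r)$ gives $\|K_f\|(p_0,r)=O(r\log\frac1r)$, and the divergence of $\int_0^{\varepsilon_0}\frac{dr}{r\log\frac1r}$ yields condition (\ref{e:6.4dc}). Your verification of the length bound $L(r)\le Cr$ (e.g. $L(\rho)=2\pi\sinh\rho$ in the hyperbolic case) is the only point requiring care, and you handle it adequately.
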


Choosing in (\ref{eqREFINED}) $\psi(t):=\frac{1}{t\log 1/t}$, we
obtain by Lemma \ref{lem4} the next result, see also Lemma 4.1 in
\cite{RS} or Lemma 13.2 in \cite{MRSY}.


\begin{theorem}\label{th3} {\it Under the hypothesis of
Theorem \ref{th1}, let $K_{f}$ have a dominant $Q_{p_0}$ in a
neighborhood of each point $p_0\in \partial D$ with finite mean
oscillation at $p_0$. Then $f$ can be extended to a homeomorphism
$\tilde f:\overline{D}_P\to\overline{D'}_P$.}
\end{theorem}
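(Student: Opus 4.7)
The plan is to apply Lemma \ref{lem5} with the specific choice
$\psi_{p_0,\varepsilon,\varepsilon_0}(t)\equiv\psi(t):=1/(t\log(1/t))$
on $(0,\varepsilon_0)$ for a sufficiently small $\varepsilon_0=\varepsilon_0(p_0)$
independent of $\varepsilon$. Since Lemma \ref{lem5} already promotes the
continuous extension produced by Lemma \ref{lem4} to a homeomorphism, it
suffices to verify the integral condition (\ref{eqREFINED}) at every boundary
point $p_0\in\partial D$, using that $K_f\le Q_{p_0}$ in a neighborhood of
$p_0$.

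First, a direct computation gives
$$I_{p_0,\varepsilon_0}(\varepsilon)\ =\
\int\limits_{\varepsilon}^{\varepsilon_0}\frac{dt}{t\log(1/t)}\ =\
\log\log(1/\varepsilon)\ -\ \log\log(1/\varepsilon_0)\ ,$$
so that $I_{p_0,\varepsilon_0}(\varepsilon)\to\infty$ and
$I_{p_0,\varepsilon_0}^2(\varepsilon)\sim (\log\log(1/\varepsilon))^2$ as
$\varepsilon\to 0$. In particular, $\psi$ is locally integrable on
$(0,\infty)$ and has a non-integrable singularity at $0$, which places us in
the setting mentioned in Remark \ref{rmk3}.

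Second, I would pass to a coordinate chart around $p_0$, in which the surface
metric $h$ is bi-Lipschitz equivalent to the Euclidean metric and the area
element $dh$ differs from Lebesgue measure by a bounded conformal factor. This
reduces the integral in (\ref{eqREFINED}) to its plane counterpart, where the
standard finite mean oscillation estimate (Lemma 4.1 in \cite{RS}, equivalently
Lemma 13.2 in \cite{MRSY}) applies: if $Q_{p_0}$ has finite mean oscillation
at $p_0$, then for sufficiently small $\varepsilon_0$,
$$\int\limits_{R(p_0,\varepsilon,\varepsilon_0)}
\frac{Q_{p_0}(p)\,dh(p)}{h(p,p_0)^2\log^2(1/h(p,p_0))}\ =\
O\!\left(\log\log\frac{1}{\varepsilon}\right)\ \ \ \ \text{as}\ \varepsilon\to 0.$$
Since $K_f\le Q_{p_0}$ near $p_0$, the left-hand side of (\ref{eqREFINED}) is
dominated by the quantity above, and hence is
$O(\log\log(1/\varepsilon))=o((\log\log(1/\varepsilon))^2)=o(I_{p_0,\varepsilon_0}^2(\varepsilon))$,
which is exactly (\ref{eqREFINED}).

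Having verified (\ref{eqREFINED}) at every $p_0\in\partial D$, Lemma
\ref{lem5} delivers the homeomorphic extension
$\tilde f:\overline{D}_P\to\overline{D'}_P$. The only substantive step is the
FMO bound; its plane version is classical, and its transfer to the Riemann
surface setting is mechanical, since all that is needed is that near $p_0$ the
metric $h$ and the Lebesgue measure of an isothermal chart agree up to bounded
multiplicative constants, which can be absorbed into the implicit constant of
the $O(\cdot)$.
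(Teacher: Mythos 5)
Your proposal is correct and follows essentially the same route as the paper: the paper proves Theorem \ref{th3} precisely by choosing $\psi(t)=1/(t\log(1/t))$ in condition (\ref{eqREFINED}) and invoking the classical FMO estimate of Lemma 4.1 in \cite{RS} (Lemma 13.2 in \cite{MRSY}) to get the $O(\log\log(1/\varepsilon))=o(I^2)$ bound, then applying Lemma \ref{lem4}/\ref{lem5}. Your only additions are the explicit computation of $I_{p_0,\varepsilon_0}(\varepsilon)$ and the remark on transferring the plane FMO bound to the surface via an isothermal chart, both of which the paper leaves implicit.
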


By Corollary 4.1 in \cite{RS} or Corollary 13.3 in \cite{MRSY} we
obtain the following.


\begin{corollary}\label{cor2} {\it In particular, the conclusion of Theorem \ref{th3} holds if
\begin{equation}\label{eqOSKRSS6.6.3}
\overline{\lim\limits_{\varepsilon\to0}}\ \
\dashint_{D(p_0,\varepsilon)}K_{f}(p)\ dh(p)\ <\ \infty\qquad\qquad
\forall\ p_0\in\partial D\end{equation} where $D(p_0,\varepsilon)$
is the infinitesimal disk $\ \{p\in \Bbb S:\ h(p,p_0)\ <\
\varepsilon\}$.}
\end{corollary}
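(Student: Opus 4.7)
The plan is to derive Corollary \ref{cor2} directly from Theorem \ref{th3} by exhibiting $K_f$ itself as the required dominant. The key observation is that the hypothesis (\ref{eqOSKRSS6.6.3}) is precisely an infinitesimal boundedness condition on the spherical means of $K_f$ at boundary points, and this boundedness is known to force finite mean oscillation (FMO).

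First I would fix an arbitrary $p_0 \in \partial D$ and set $Q_{p_0}(p) := K_f(p)$ on a small neighborhood of $p_0$. By (\ref{eqOSKRSS6.6.3}) there exist $M < \infty$ and $\varepsilon_1 > 0$ such that the infinitesimal means
\[
A(\varepsilon)\ :=\ \dashint_{D(p_0,\varepsilon)} K_f(p)\ dh(p)
\]
satisfy $A(\varepsilon) \le M$ for every $\varepsilon \in (0,\varepsilon_1)$. Next I would invoke the standard fact (Corollary 4.1 in \cite{RS}, equivalently Corollary 13.3 in \cite{MRSY}, cited just above the statement) that a nonnegative measurable function whose infinitesimal mean values at a point remain bounded automatically has finite mean oscillation at that point; the one-line verification is
\[
\dashint_{D(p_0,\varepsilon)} |K_f(p) - A(\varepsilon)|\ dh(p)\ \leq\ 2\,A(\varepsilon)\ \leq\ 2M,
\]
so $\overline{\lim}_{\varepsilon \to 0}$ of the oscillation means is at most $2M < \infty$. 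Hence $K_f$ has FMO at every $p_0 \in \partial D$.

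With $Q_{p_0} := K_f$ (trivially a dominant of itself), the hypotheses of Theorem \ref{th3} are satisfied in a neighborhood of each boundary point, and Theorem \ref{th3} then delivers the desired homeomorphic extension $\tilde f : \overline D_P \to \overline{D^{\prime}}_P$. There is no substantial obstacle here: the entire argument is a reduction. The only point requiring a small amount of care is to ensure that the local chart and metric $h$ appearing in (\ref{eqOSKRSS6.6.3}) are the same ones entering the FMO definition used in Theorem \ref{th3}, which is immediate from Remark \ref{rmk1} (the compactness of a neighborhood of $\partial D$ makes the choice of metric $h$ on $\Bbb S$ irrelevant in the limit $\varepsilon \to 0$).
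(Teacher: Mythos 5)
Your proposal is correct and follows essentially the same route as the paper: the paper disposes of this corollary by citing Corollary 4.1 in \cite{RS} (Corollary 13.3 in \cite{MRSY}), whose content is exactly your observation that bounded infinitesimal means of the nonnegative function $K_f$ force finite mean oscillation at $p_0$, so that Theorem \ref{th3} applies with $Q_{p_0}=K_f$ as its own dominant. Your one-line triangle-inequality verification $\dashint_{D(p_0,\varepsilon)}|K_f-A(\varepsilon)|\,dh\le 2A(\varepsilon)$ is precisely the argument behind that citation.
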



\begin{corollary}\label{cor3} {\it The conslusion of Theorem \ref{th3} holds if
every point $p_0\in\partial D$ is a Lebesgue point of the function
$K_{f}$ or its dominant $Q_{p_0}$.}
\end{corollary}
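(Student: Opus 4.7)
The plan is to reduce Corollary \ref{cor3} to the earlier results, treating the two alternatives in the hypothesis separately at each boundary point. Recall that by definition, $p_0$ is a Lebesgue point of a locally integrable function $\varphi$ provided
$$\lim_{\varepsilon\to 0}\ \dashint_{D(p_0,\varepsilon)}|\varphi(p)-\varphi(p_0)|\, dh(p)\ =\ 0,$$
from which the triangle inequality forces $\dashint_{D(p_0,\varepsilon)}\varphi(p)\, dh(p)\to\varphi(p_0)<\infty$ as $\varepsilon\to 0$.

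First, suppose $p_0\in\partial D$ is a Lebesgue point of $K_f$. Applying the above display to $\varphi=K_f$ shows that the limit superior in (\ref{eqOSKRSS6.6.3}) equals $K_f(p_0)<\infty$, so Corollary \ref{cor2} can be invoked at this point.

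Second, suppose $p_0$ is a Lebesgue point of a dominant $Q_{p_0}$ of $K_f$. Writing $\overline Q_\varepsilon$ for the integral mean of $Q_{p_0}$ over $D(p_0,\varepsilon)$, the triangle inequality combined with the Lebesgue point property yields
$$\dashint_{D(p_0,\varepsilon)}|Q_{p_0}(p)-\overline Q_\varepsilon|\, dh(p)\ \le\ 2\dashint_{D(p_0,\varepsilon)}|Q_{p_0}(p)-Q_{p_0}(p_0)|\, dh(p)\ \to\ 0$$
as $\varepsilon\to 0$, so $Q_{p_0}$ has vanishing (and hence finite) mean oscillation at $p_0$. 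The hypothesis of Theorem \ref{th3} thus holds at $p_0$, and collecting the two cases over all $p_0\in\partial D$ yields the homeomorphic extension.

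No substantial obstacle is anticipated: the entire content of the corollary is that the Lebesgue point property is strictly stronger than both the pointwise boundedness of the disk averages in Corollary \ref{cor2} and the finite mean oscillation condition in Theorem \ref{th3}. The only care needed is to match each alternative to the correct previously proved result, which the two displays above do immediately.
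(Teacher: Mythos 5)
Your proof is correct and follows essentially the same route the paper intends: the corollary is stated without proof as an immediate consequence of Corollary \ref{cor2} (Lebesgue point of $K_f$ forces the disk averages to converge to the finite value $K_f(p_0)$, hence the limsup in (\ref{eqOSKRSS6.6.3}) is finite) and of Theorem \ref{th3} (a Lebesgue point of the dominant $Q_{p_0}$ gives vanishing, hence finite, mean oscillation at $p_0$). Your case split and the two displayed estimates are exactly the standard argument behind this deduction.
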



The next statement also follows from Lemma \ref{lem4} under the
choice $\psi(t)=1/t.$


\begin{theorem}\label{th4} {\it Under the hypothesis of
Theorem \ref{th1}, let, for some $\varepsilon_0>0$,
\begin{equation}\label{eqOSKRSS10.336a}\int\limits_{\varepsilon<h(p,p_0)<\varepsilon_0}K_{f}(p)\
\frac{dh(p)}{h^2(p,p_0)}\ =\
o\left(\left[\log\frac{1}{\varepsilon}\right]^2\right)\ \ \
\mbox{as}\ \ \varepsilon\to 0\ \ \ \forall\ p_0\in\partial
D\end{equation} Then $f$ can be extended to a homeomorphism of $\
\overline{D}_P$ onto $\overline{D'}_P$.}
\end{theorem}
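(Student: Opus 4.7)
The plan is to deduce Theorem \ref{th4} directly from Lemma \ref{lem5} by exhibiting a specific family $\psi_{p_0,\varepsilon,\varepsilon_0}$ that turns the abstract hypothesis (\ref{eqREFINED}) of Lemma \ref{lem4} into the concrete estimate (\ref{eqOSKRSS10.336a}). Concretely, I would take the family to be independent of the inner radius $\varepsilon$, namely
\[
\psi_{p_0,\varepsilon,\varepsilon_0}(t)\ :=\ \begin{cases} 1/t, & t\in(0,\varepsilon_0),\\ 0, & t\in[\varepsilon_0,\infty),\end{cases}
\]
which is clearly measurable and strictly positive on $(\varepsilon,\varepsilon_0)$, so the auxiliary integrability requirement $0<I_{p_0,\varepsilon_0}(\varepsilon)<\infty$ of Lemma \ref{lem4} is trivially satisfied.

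Next I would carry out the two small computations needed. First, for the normalizing integral,
\[
I_{p_0,\varepsilon_0}(\varepsilon)\ =\ \int\limits_{\varepsilon}^{\varepsilon_0}\frac{dt}{t}\ =\ \log\frac{\varepsilon_0}{\varepsilon}\ =\ \log\frac{1}{\varepsilon}\ +\ \log\varepsilon_0,
\]
so that $I_{p_0,\varepsilon_0}^2(\varepsilon)\sim(\log 1/\varepsilon)^2$ as $\varepsilon\to 0$. Second, for the weighted distortion integral,
\[
\int\limits_{R(p_0,\varepsilon,\varepsilon_0)}K_f(p)\,\psi_{p_0,\varepsilon,\varepsilon_0}^2(h(p,p_0))\,dh(p)\ =\ \int\limits_{\varepsilon<h(p,p_0)<\varepsilon_0}K_f(p)\,\frac{dh(p)}{h^2(p,p_0)},
\]
which is precisely the left-hand side of the hypothesis (\ref{eqOSKRSS10.336a}). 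Combining these two displays, the assumption (\ref{eqOSKRSS10.336a}) becomes
\[
\int\limits_{R(p_0,\varepsilon,\varepsilon_0)}K_f(p)\,\psi_{p_0,\varepsilon,\varepsilon_0}^2(h(p,p_0))\,dh(p)\ =\ o\!\left(\left[\log\frac{1}{\varepsilon}\right]^2\right)\ =\ o\!\left(I_{p_0,\varepsilon_0}^2(\varepsilon)\right)
\]
as $\varepsilon\to 0$, for every $p_0\in\partial D$. This is exactly condition (\ref{eqREFINED}) of Lemma \ref{lem4}.

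Having verified the hypothesis of Lemma \ref{lem4} (and noting that all remaining assumptions of Theorem \ref{th1} are carried over from the statement of Theorem \ref{th4}), I would invoke Lemma \ref{lem5} to conclude that $f$ extends to a homeomorphism $\tilde f:\overline{D}_P\to\overline{D'}_P$. Since this is a pure specialization of the general integral criterion already packaged in Lemma \ref{lem5}, there is essentially no obstacle; the only thing to be careful about is the asymptotic identification $\log(\varepsilon_0/\varepsilon)\sim\log(1/\varepsilon)$, which lets one replace $I_{p_0,\varepsilon_0}^2(\varepsilon)$ by $[\log(1/\varepsilon)]^2$ in the little-$o$ expression without losing or gaining constants. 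The real work was already done in Lemmas \ref{lem3}, \ref{lem4} and \ref{lem5}; Theorem \ref{th4} is obtained just by choosing the weight $\psi(t)=1/t$.
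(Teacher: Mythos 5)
Your proposal is correct and is exactly the paper's argument: the paper derives Theorem \ref{th4} in one line "from Lemma \ref{lem4} under the choice $\psi(t)=1/t$", and your computation of $I_{p_0,\varepsilon_0}(\varepsilon)=\log(\varepsilon_0/\varepsilon)\sim\log(1/\varepsilon)$ together with the identification of the weighted integral is precisely the verification that choice requires. Invoking Lemma \ref{lem5} rather than Lemma \ref{lem4} alone for the homeomorphic (not merely continuous) extension is, if anything, slightly more careful than the paper's own citation.
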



\begin{remark}\label{rmOSKRSS200} {\rm Choosing in Lemma \ref{lem4}
the function $\psi(t)=1/(t\log 1/t)$ instead of $\psi(t)=1/t$,
(\ref{eqOSKRSS10.336a}) can be replaced by the more weak condition
\begin{equation}\label{eqOSKRSS10.336b}
\int\limits_{\varepsilon<h(p,p_0)<\varepsilon_0}\frac{K_{f}(p)\
dh(p)}{\left(h(p,p_0)\ \log{\frac{1}{h(p,p_0)}}\right) ^2}\ =\
o\left(\left[\log\log\frac{1}{\varepsilon}\right]^2\right)\end{equation}
and (\ref{eqOSKRSS100d}) by the condition
\begin{equation}\label{eqOSKRSS10.336h} k_{p_0}(r)\ =\ o
\left(\log\frac{1}{\varepsilon}\log\,\log\frac{1}{\varepsilon}
\right).\end{equation} Of course, we could give here the whole scale
of the corresponding condition of the logarithmic type using
suitable functions $\psi(t).$}
\end{remark}


\section{On interconnections between integral conditions}

For every non-decreasing function $\Phi:[0,\infty ]\rightarrow
[0,\infty ]$, the {\bf inverse function} $\Phi^{-1}$ can be well
defined by setting \begin{equation}\label{eq5.5CC} \Phi^{-1}(\tau)\
=\ \inf\limits_{\Phi(t)\ge \tau}\ t\ . \end{equation} As usual, here
$\inf$ is equal to $\infty$ if the set of $t\in[0,\infty ]$ such
that  $\Phi(t)\ge \tau$ is empty. Note that the function $\Phi^{-1}$
is non-decreasing, too.

\begin{remark}\label{rmk3.333} {\rm Immediately by the definition it
is evident that \begin{equation}\label{eq5.5CCC} \Phi^{-1}(\Phi(t))\
\le\ t\ \ \ \ \ \ \ \ \forall\ t\in[ 0,\infty ]
\end{equation} with the equality in (\ref{eq5.5CCC}) except intervals
of constancy of the function $\Phi(t)$.}\end{remark}

Recall that a function $\Phi :[0,\infty ]\rightarrow [0,\infty ]$ is
called convex if
$$
\Phi (\lambda t_1 + (1-\lambda) t_2)\ \le\ \lambda\ \Phi (t_1)\ +\
(1-\lambda)\ \Phi (t_2)
$$
for all $t_1,$ $t_2\in[0,\infty ]$ and $\lambda\in [0,1]$.

\medskip

In what follows, $\Bbb H(R)$ denotes the hyperbolic disk centered at
the origin with the hyperbolic radius $R=\log\, (1+r)/(1-r)$, $\,
r\in(0,1)$ is its Euclidean radius:
\begin{equation}\label{eq5.5Cf} {\Bbb H}(R)\ =\ \{\ z\in{\Bbb C}:\
h(z,0)\ <\ R\ \}\ ,\ \ \ \ R\in(0,\infty)\ .\end{equation} Further
we also use the notation of the {\bf hyperbolic sine}: $ \sinh t\
:=\ ({e^t-e^{-t}})/{2}\ . $

The following statement is an analog of Lemma 3.1 in \cite{RSY}
adopted to the hyperbolic geometry in the unit disk $\Bbb D:=\{ z\in
\Bbb C: |z|< 1\}$.

\begin{lemma} \label{lem5.5C} {\it\,  Let  $Q:{\Bbb H}(\varepsilon)\rightarrow [0,\infty
]$, $\varepsilon\in(0,1)$, be a measurable function and
$\Phi:[0,\infty ]\rightarrow (0,\infty ]$ be a non-decreasing convex
function with a finite mean integral value $M(\varepsilon)$ of the
function $\Phi\circ Q$ on $\Bbb H(\varepsilon)$. Then
\begin{equation}\label{eq3.222} \int\limits_{0}^{\varepsilon}\
\frac{d\rho}{\rho q(\rho)}\ \ge\ \frac{1}{2}
\int\limits_{\delta(\varepsilon)}^{\infty}\ \frac{d\tau}{\tau
\left[\Phi^{-1}(\tau )\right]}
\end{equation} where $q(\rho)$ is the average of $Q$ on the circle
$\Bbb S(\rho)=\{ z\in \Bbb D: h(z,0)=\rho\}$ and
\begin{equation}\label{eqDELTA}
\delta(\varepsilon)\ =\
\exp\left(4\sinh^2\frac{\varepsilon}{2}\right)\cdot
\frac{M(\varepsilon)}{\varepsilon^2}\ >\ \tau_0\ :=\ \Phi(0)\
>\ 0 \ .
\end{equation}}\end{lemma}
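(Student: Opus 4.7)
The plan is to mirror the Euclidean Lemma 3.1 of \cite{RSY} in hyperbolic polar coordinates on $\mathbb D$. First, using that $\mathbb H(\varepsilon)$ has hyperbolic area $4\pi\sinh^2(\varepsilon/2)$ and that $\mathbb S(\rho)$ has hyperbolic length $2\pi\sinh\rho$, I would rewrite the hypothesis radially as
\begin{equation*}
\int_0^\varepsilon \sinh\rho\cdot A(\rho)\,d\rho \;=\; 2\sinh^2(\varepsilon/2)\,M(\varepsilon),\qquad A(\rho):=\frac{1}{2\pi\sinh\rho}\int_{\mathbb S(\rho)}\Phi\circ Q\,ds_h,
\end{equation*}
and apply Jensen's inequality on each hyperbolic circle (valid since $\Phi$ is convex and non-decreasing) to get $\Phi(q(\rho))\le A(\rho)$. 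Combined with the definition of $\Phi^{-1}$ from Remark \ref{rmk3.333}, this gives $q(\rho)\le \Phi^{-1}(A(\rho))$ off plateaus of $\Phi$, and hence
\begin{equation*}
\int_0^\varepsilon \frac{d\rho}{\rho\, q(\rho)} \;\ge\; \int_0^\varepsilon \frac{d\rho}{\rho\,\Phi^{-1}(A(\rho))}.
\end{equation*}

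The second step is to lower-bound the right-hand side by the target $\tau$-integral. I would replace $A$ by its non-increasing radial rearrangement $A_*$ against the weight $\sinh\rho$: since $1/\rho$ and $1/\Phi^{-1}$ are both non-increasing, a Hardy--Littlewood-type inequality shows that passing from $A$ to $A_*$ only decreases $\int d\rho/(\rho\,\Phi^{-1}(A))$ while preserving the constraint. For the monotone profile $A_*$ one then substitutes $\tau=A_*(\rho)$; the $1/\rho^2$-like behaviour of the extremal $A_*$ converts $d\rho/\rho$ into $(1/2)\,d\tau/\tau$, producing the target integrand $1/(\tau\,\Phi^{-1}(\tau))$ with the stated factor $\tfrac{1}{2}$.

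Finally, the lower limit $\delta(\varepsilon)$ is the boundary value $A_*(\varepsilon)$ on the extremal profile; matching against the integral constraint fixes it as $\exp(4\sinh^2(\varepsilon/2))\cdot M(\varepsilon)/\varepsilon^2$, the exponential factor encoding the hyperbolic correction coming from the discrepancy between the radial weights $\sinh\rho\,d\rho$ (from the hyperbolic area element) and $d\rho/\rho$ (from the target integral) over $(0,\varepsilon)$. The strict inequality $\delta(\varepsilon)>\tau_0=\Phi(0)$ follows directly from $\Phi(0)>0$ and this explicit formula, so $\Phi^{-1}$ is defined and finite throughout the integration range. I expect the main obstacle to be precisely this last step: the constants $\tfrac{1}{2}$ and $\exp(4\sinh^2(\varepsilon/2))$ are both sharp in the extremal configuration and demand careful bookkeeping of the hyperbolic Jacobian in the change of variables, rather than the cruder estimates that a naive Euclidean transplant would provide.
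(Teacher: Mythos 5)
Your first step (Jensen's inequality on each hyperbolic circle, giving $\Phi(q(\rho))\le A(\rho)$, together with the area and length formulas $4\pi\sinh^2(\varepsilon/2)$ and $2\pi\sinh\rho$) coincides with the paper's. But your displayed inequality $\int_0^\varepsilon\frac{d\rho}{\rho\,q(\rho)}\ge\int_0^\varepsilon\frac{d\rho}{\rho\,\Phi^{-1}(A(\rho))}$ is already problematic on the plateau of $\Phi$: with $t_*:=\sup_{\Phi(t)=\Phi(0)}t$ one only has $\Phi^{-1}(\Phi(q))\le q$, so $q(\rho)\le\Phi^{-1}(A(\rho))$ can fail when $q(\rho)\le t_*$, and where $A(\rho)=\Phi(0)$ the right-hand integrand is $1/(\rho\cdot 0)=\infty$ while the left-hand one may be finite. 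The paper never passes through $\Phi^{-1}(A(\rho))$ pointwise: it works with $h(\rho):=\rho^2\Phi(q(\rho))$ in the logarithmic variable $s=\log(1/\rho)$, introduces the sets $S_*=\{\,s: q(e^{-s})>t_*\,\}$ and $T=\{\,s: h(e^{-s})>M(\varepsilon)\,\}$, and establishes the single usable bound $q(e^{-s})\le H^{-1}(2s+\log M(\varepsilon))$, $H=\log\Phi$, on the complement of $T_*=T\cap S_*$, verifying separately that it holds off $S_*$ because $t_*<\Phi^{-1}(e^{2s}M(\varepsilon))$ there.

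The more serious gap is the passage from the rearranged profile to the $\tau$-integral. Rearrangement only tells you that $A_*$ is non-increasing and satisfies the same integral constraint; it does not make $A_*$ equal to an extremal $1/\rho^2$-profile, so the substitution $\tau=A_*(\rho)$ does not ``convert $d\rho/\rho$ into $\frac12\,d\tau/\tau$'' for a general admissible $A_*$. To close this you would still need a pointwise estimate of the type $A_*(\rho)\le 2C/\rho^2$ for non-increasing $A_*$ under $\int_0^{\rho}A_*(t)\,t\,dt\le C$ --- that is, exactly the quantitative content the paper extracts by a different (Chebyshev-type) route: from $\int h(e^{-s})\,ds\le 2\sinh^2(\varepsilon/2)M(\varepsilon)$ it gets $|T|\le 2\sinh^2(\varepsilon/2)$, and since $s\mapsto 1/H^{-1}(2s+\Delta)$ is non-increasing, deleting $T_*$ at worst shifts the lower limit of integration by $|T_*|$, which after $\eta=2s+\Delta$ and $\tau=e^{\eta}$ produces precisely the factor $\exp\bigl(4\sinh^2\frac{\varepsilon}{2}\bigr)$ in $\delta(\varepsilon)$. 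Your reading of that exponential as a sharp ``hyperbolic Jacobian'' attached to the extremal profile is therefore off the mark: it is the doubled, exponentiated measure of an exceptional set, and a correctly executed rearrangement argument would in fact yield a different (smaller) admissible lower limit. As written, the proposal identifies the right opening move but does not contain a proof of the inequality.
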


\medskip

\begin{proof} Since $M(\varepsilon)<\infty$ we may assume with no loss of generality that
$\Phi(t)<\infty$ for all $t\in[0,\infty)$ because in the contrary
case $Q\in L^{\infty}$ and then the left-hand side in
(\ref{eq3.222}) is equal to $\infty$. Moreover, we may assume that
$\Phi(t)$ is not constant because in the contrary case
$\Phi^{-1}(\tau)\equiv \infty$ for all $\tau > \tau_0$ and hence the
right-hand side in (\ref{eq3.222}) is equal to 0. Note also that
$\Phi(\tau)$ is (strictly) increasing, convex and continuous in the
segment $[t_*,\infty]$ and
\begin{equation}\label{eqCONSTANT} \Phi(t)\ \equiv\ \tau_0\ \ \ \ \ \ \ \ \ \forall\ t\
\in [0,t_*] \ \ \ \hbox{where}\ \ \ t_*\ :=\
\sup\limits_{\Phi(t)=\tau_0}\ t\ .\end{equation} Setting $H(t)\colon
=\log\Phi(t),$ we see that $H^{-1}(\eta)=\Phi^{-1}(e^{\eta}),$
$\Phi^{-1}(\tau)= H^{-1}(\log\ \tau)$. Thus, we obtain that
\begin{equation}\label{eq5.555I}q(\rho) =
H^{-1}\left(\log \frac{h(\rho)}{\rho^2}\right) = H^{-1}\left(2\log
\frac{1}{\rho} + \log\ h(\rho)\right)\ \ \ \ \ \ \ \ \forall\ \rho\
\in R_*\end{equation} where $h(\rho) :=\ \rho^2\Phi(q(\rho))$ and
$R_* = \{\ \rho\in(0, \varepsilon):\ q(\rho)\
>\ t_*\}$. Then also \begin{equation}\label{eq5.555K}q(e^{-s})\ =\ H^{-1}\left(2s\ +\ \log\
h(e^{-s})\right)\ \ \ \ \ \ \ \ \forall\ s\ \in S_*\end{equation}
where $ S_*\ =\ \{ s\in(\ \log\frac{1}{\varepsilon}\ ,\ \infty\ ):\
q(e^{-s})\
>\ t_*\}$.

Now, by the Jensen inequality, see e.g. Theorem 2.6.2 in \cite{Ra},
we have that
\begin{equation}\label{eq5.555L} \int\limits^{\infty}_{\log \frac{1}{\varepsilon}} h(e^{-s})\
ds\ =\ \int\limits_{0}^{\varepsilon} h(\rho)\ \frac{d\rho}{\rho}\ =\
\int\limits_{0}^{\varepsilon} \Phi(q(\rho))\ \rho\
{d\rho}\end{equation}
$$\le\ \int\limits_{0}^{\varepsilon}\left(\dashint_{S(\rho)}
\Phi(Q(z))\ ds_h(z)\right)\ \rho\ {d\rho}\ \le
2\sinh^2\frac{\varepsilon}{2}\cdot M(\varepsilon)$$ because $\Bbb
H(\varepsilon)$ has the hyperbolic area $A(\varepsilon)=4\pi
\sinh^2\frac{\varepsilon}{2}$ and $\Bbb S(\rho)$ has the hyperbolic
length $L(\rho)=2\pi \sinh\rho$, see e.g. Theorem 7.2.2 in
\cite{Be}, and, moreover, $\sinh \rho \ge \rho$ by the Taylor
expansion. Then arguing by contradiction it is easy to see for the
set $T\ :=\ \{\ s\in (\ \log\frac{1}{\varepsilon}\ ,\ \infty\ ):\
h(e^{-s})\
>\ M(\varepsilon)\ \}$ that its length
\begin{equation}\label{eq5.555N} |T|\ =\ \int\limits_{T}ds\ \le\
2\sinh^2\frac{\varepsilon}{2}\ .
\end{equation}

Next, let us show for $T_*:\ =\ T\cap S_*$ that
\begin{equation}\label{eq5.555O}q\left(e^{-s}\right) \
\le\ H^{-1}\left(2s\ +\ \log\ M(\varepsilon)\right)\ \ \ \ \ \ \ \ \
\ \forall\ s\in\left(\log\frac{1}{\varepsilon}\, ,\infty\right)
\setminus T_*\ .\end{equation} Indeed, note that
$\left(\log\frac{1}{\varepsilon}\, ,\infty\right)\setminus T_* =
\left[\left(\log\frac{1}{\varepsilon}\, ,\infty\right)\setminus
S_*\right] \cup \left[\left(\log\frac{1}{\varepsilon}\,
,\infty\right)\setminus T\right] =
\left[\left(\log\frac{1}{\varepsilon}\, ,\infty\right)\setminus
S_*\right] \cup \left[S_*\setminus T\right]$. The inequality
(\ref{eq5.555O}) holds for $s\in S_*\setminus T$ by (\ref{eq5.555K})
because $H^{-1}$ is a non-decreasing function. Note also that
\begin{equation}\label{eq5.K555}  e^{2s}M(\varepsilon)\ >\ \Phi(0)\ =\ \tau_0
\ \ \ \ \ \ \ \forall\ s\in\left(\log\frac{1}{\varepsilon}\ ,\
\infty\right)\end{equation} and then
\begin{equation}\label{eq5.L555} t_* <
\Phi^{-1}\left(e^{2s}M(\varepsilon) \right) = H^{-1}\left(2s\ +\
\log\ M(\varepsilon)\right)\ \ \ \ \  \forall\ s\in\left(\log
\frac{1}{\varepsilon}\ ,\ \infty\right)\end{equation} Consequently,
 (\ref{eq5.555O}) holds for all $s\in(\ \log\frac{1}{\varepsilon}\ ,\ \infty\ )\setminus S_*$, too.

Since $H^{-1}$ is non-decreasing, we have by
(\ref{eq5.555N})-(\ref{eq5.555O}) that, for  $ \Delta :=\log
{M(\varepsilon)}$,
\begin{equation}\label{eq5.555P} \int\limits_{0}^{\varepsilon}\
\frac{d\rho}{\rho q(\rho)}\ =\ \int\limits^{\infty}_{\log
\frac{1}{\varepsilon}}\ \frac{ds}{q(e^{-s})}\ \ge\
\int\limits_{\left(\log\frac{1}{\varepsilon},\infty\right)\setminus
T_*}\ \frac{ds}{H^{-1}(2s + \Delta)}\ \ge\ \end{equation}
$$
\int\limits^{\infty}_{|T_*|+\log \frac{1}{\varepsilon}}\
\frac{ds}{H^{-1}(2s + \Delta)}\ \ge\
\int\limits^{\infty}_{2\sinh^2\frac{\varepsilon}{2}+\log
\frac{1}{\varepsilon}}\ \frac{ds}{H^{-1}(2s + \Delta)}\ =\
\frac{1}{2} \int\limits_{4\sinh^2\frac{\varepsilon}{2}+\log
\frac{M(\varepsilon)}{\varepsilon^2}}^{\infty}\
\frac{d\eta}{H^{-1}(\eta)}
$$ and after the replacement of variables $\eta=\log\tau$, $\tau =e^{\eta}$, we come to (\ref{eq3.222}).
\end{proof} $\Box$

\begin{theorem} \label{th6}{\it\, Let $Q:{\Bbb H}(\varepsilon)\rightarrow [0,\infty
]$, $\varepsilon\in(0,1)$, be a measurable function such that
\begin{equation}\label{eq5.555} \int\limits_{{\Bbb H}(\varepsilon)}
\Phi (Q(z))\ dh(z)\  <\ \infty\end{equation} where  $\Phi:[0,\infty
]\rightarrow [0,\infty ]$ is a non-decreasing convex function with
\begin{equation}\label{eq3.333a} \int\limits_{\delta_0}^{\infty}\
\frac{d\tau}{\tau \Phi^{-1}(\tau )}\ =\ \infty
\end{equation} for some $\delta_0\ >\ \tau_0\ :=\ \Phi(0).$
Then \begin{equation}\label{eq3.333A} \int\limits_{0}^{\varepsilon}\
\frac{d\rho}{\rho q(\rho)}\ =\ \infty\,,
\end{equation} where $q(\rho)$ is the average of $Q$
on the hyperbolic circle    $h(z,0)=\rho$.}
\end{theorem}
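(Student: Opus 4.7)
The plan is to apply Lemma \ref{lem5.5C} directly and then observe that the divergence hypothesis (\ref{eq3.333a}), which is stated with a particular lower limit $\delta_0$, automatically transfers to the lower limit $\delta(\varepsilon)$ that comes out of that lemma.

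First I would verify the hypotheses of Lemma \ref{lem5.5C} hold in the present setting. The only nontrivial one is the finiteness of the mean value $M(\varepsilon)$ of $\Phi\circ Q$ on $\Bbb H(\varepsilon)$, but this is immediate from (\ref{eq5.555}) since $\Bbb H(\varepsilon)$ has finite hyperbolic area $4\pi\sinh^2(\varepsilon/2)$. Invoking the lemma gives
\begin{equation*}
\int_{0}^{\varepsilon}\frac{d\rho}{\rho q(\rho)}\ \ge\ \frac{1}{2}\int_{\delta(\varepsilon)}^{\infty}\frac{d\tau}{\tau\,\Phi^{-1}(\tau)}
\end{equation*}
with $\delta(\varepsilon)>\tau_0$ by (\ref{eqDELTA}), so it suffices to prove that the right-hand side is $+\infty$.

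The key observation is that for any $a>\tau_0$, the divergence of $\int_{\delta_0}^{\infty}\frac{d\tau}{\tau\,\Phi^{-1}(\tau)}$ implies the divergence of $\int_{a}^{\infty}\frac{d\tau}{\tau\,\Phi^{-1}(\tau)}$. If $a\le\delta_0$, this is trivial since the new integrand is nonnegative and the interval is larger. If $a>\delta_0$, then it suffices to show that $\int_{\delta_0}^{a}\frac{d\tau}{\tau\,\Phi^{-1}(\tau)}$ is finite. Here we use that $\Phi^{-1}$ is non-decreasing and that $\Phi^{-1}(\delta_0)>0$, which follows from $\delta_0>\tau_0=\Phi(0)$ together with the definition (\ref{eq5.5CC}) of $\Phi^{-1}$. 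Hence on $[\delta_0,a]$ we have $\Phi^{-1}(\tau)\ge\Phi^{-1}(\delta_0)>0$ and thus
\begin{equation*}
\int_{\delta_0}^{a}\frac{d\tau}{\tau\,\Phi^{-1}(\tau)}\ \le\ \frac{1}{\Phi^{-1}(\delta_0)}\,\log\frac{a}{\delta_0}\ <\ \infty.
\end{equation*}
Applying this with $a=\delta(\varepsilon)$ gives the desired divergence of the right-hand side of the inequality from Lemma \ref{lem5.5C}, completing the proof.

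Because the argument is essentially a direct invocation of Lemma \ref{lem5.5C}, I do not expect a substantive obstacle. The only subtlety worth flagging is the insensitivity of condition (\ref{eq3.333a}) to the choice of the cutoff $\delta_0$, which is handled by the monotonicity of $\Phi^{-1}$ together with the strict inequality $\delta_0>\tau_0$ built into the hypothesis.
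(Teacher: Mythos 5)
Your proposal covers only half of the theorem. Lemma \ref{lem5.5C} is stated for $\Phi:[0,\infty]\to(0,\infty]$, i.e.\ it assumes $\tau_0=\Phi(0)>0$ (this is built into its conclusion, where $\delta(\varepsilon)>\tau_0>0$, and is used in its proof). Theorem \ref{th6}, however, only assumes $\Phi:[0,\infty]\to[0,\infty]$, so $\Phi(0)=0$ is allowed — and this is in fact the typical situation for the applications. Your claim that ``the only nontrivial hypothesis to verify is the finiteness of $M(\varepsilon)$'' is therefore not accurate: when $\Phi(0)=0$ the lemma simply does not apply as stated, and your direct invocation of it has a gap. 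The paper's proof is organized precisely around this point: the case $\Phi(0)\ne0$ is dispatched exactly as you do (the transfer of divergence from the lower limit $\delta_0$ to $\delta(\varepsilon)$ via the monotonicity of $\Phi^{-1}$ and $\Phi^{-1}(\delta_0)>0$ is the content of (\ref{eq3.e333}) in Remark \ref{rmk4.7www}), while the case $\Phi(0)=0$ is reduced to the first one by replacing $\Phi$ with the truncation $\Phi_*:=\max(\Phi,\delta)$ for a fixed $\delta\in(0,\delta_0)$. One checks that $\int_{{\Bbb H}(\varepsilon)}\Phi_*(Q)\,dh<\infty$ still holds because $|\Phi_*-\Phi|\le\delta$ and ${\Bbb H}(\varepsilon)$ has finite measure, and that $\Phi_*^{-1}=\Phi^{-1}$ on $[\delta,\infty)\supseteq[\delta_0,\infty)$, so the divergence condition (\ref{eq3.333a}) is inherited by $\Phi_*$; since $\Phi_*(0)=\delta>0$, the lemma now applies.

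To repair your argument you must add this truncation step (or else reprove Lemma \ref{lem5.5C} under the weaker hypothesis $\Phi\ge0$, which would require rechecking several places in its proof, e.g.\ the inequalities (\ref{eq5.K555})--(\ref{eq5.L555}) and the positivity of $\delta(\varepsilon)$). A minor further caveat: your assertion that $\delta_0>\Phi(0)$ forces $\Phi^{-1}(\delta_0)>0$ uses, beyond the bare definition (\ref{eq5.5CC}), the right-continuity of the convex function $\Phi$ at $0$ (valid once $\Phi$ is finite somewhere on $(0,\infty)$; the degenerate case $\Phi\equiv\infty$ on $(0,\infty)$ makes (\ref{eq5.555}) force $Q=0$ a.e.\ and is trivial). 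The rest of your transfer argument is correct and coincides with the paper's.
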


\begin{proof}
If $\Phi(0)\ne 0,$ then Theorem \ref{th6} directly follows from
Lemma \ref{lem5.5C} because $\Phi^{-1}$ is strictly increasing on
the interval $(\tau_0,\infty)$ and $\Phi^{-1}(\delta_0)>0$. In the
case $\Phi(0)=0,$ let us fix a number $\delta\in (0, \delta_0)$ and
set $\Phi_*(t)=\Phi(t),$ if $\Phi(t)>\delta,$ and
$\Phi_*(t)=\delta,$ if $\Phi(t)\le \delta.$ Then by (\ref{eq5.555})
we have that $\int\limits_{{\Bbb H}(\varepsilon)}\Phi_*(Q(z))\
dh(z)<\infty$ because $|\Phi_*(t)-\Phi(t)|\le \delta$ and the
measure of ${\Bbb H}(\varepsilon)$ is finite. Moreover,
$\Phi_*^{-1}(\tau)=\Phi^{-1}(\tau)$ for $\tau\ge\delta$ and then by
(\ref{eq3.333a})
$\int\limits_{\delta_0}^{\infty}\frac{d\tau}{\tau\Phi_*^{-1}(\tau)}=\infty.$
Thus, (\ref{eq3.333A}) holds again by Lemma \ref{lem5.5C}.
\end{proof}$\Box$

\bigskip

\begin{remark}\label{rmk4.7www} {\rm Note that condition (\ref{eq3.333a}) implies that
\begin{equation}\label{eq3.a333} \int\limits_{\delta}^{\infty}\
\frac{d\tau}{\tau \Phi^{-1}(\tau )}\ =\ \infty\ \ \ \ \ \ \ \ \ \
\forall\ \delta\ \in\ [0,\infty)\,.   \end{equation} but relation
(\ref{eq3.a333}) for some $\delta\in[0,\infty),$ generally speaking,
does not imply (\ref{eq3.333a}). Indeed, (\ref{eq3.333a}) evidently
implies  (\ref{eq3.a333}) for $\delta\in [0,\delta_0),$ and, for
$\delta\in(\delta_0,\infty),$ we have that
\begin{equation}\label{eq3.e333} 0\ \le\
\int\limits_{\delta_0}^{\delta}\ \frac{d\tau}{\tau \Phi^{-1}(\tau
)}\ \le\ \frac{1}{\Phi^{-1}(\delta_0)}\ \log\
\frac{\delta}{\delta_0}\ <\ \infty
\end{equation} because the function $\Phi^{-1}$ is non-decreasing and
$\Phi^{-1}(\delta_0)>0.$ Moreover, by the definition of the inverse
function $\Phi^{-1}(\tau)\equiv 0$ for all $\tau \in [0,\tau_0],$
$\tau_0=\Phi(0)$, and hence (\ref{eq3.a333}) for
$\delta\in[0,\tau_0),$ generally speaking, does not imply
(\ref{eq3.333a}). If $\tau_0 > 0$, then
\begin{equation}\label{eq3.c333} \int\limits_{\delta}^{\tau_0}\
\frac{d\tau}{\tau \Phi^{-1}(\tau )}\ =\ \infty\ \ \ \ \ \ \ \ \ \
\forall\ \delta\ \in\ [0,\tau_0)  \end{equation} However, relation
(\ref{eq3.c333}) gives no information on the function $Q$ itself
and, consequently, (\ref{eq3.a333}) for $\delta < \Phi(0)$ cannot
imply (\ref{eq3.333A}) at all.}
\end{remark}

\bigskip

\section{Other criteria for homeomorphic extension in prime ends}

Theorem \ref{th2} has a magnitude of other consequences thanking to
Theorem \ref{th6}.


\begin{theorem}\label{th5} {\it Under the hypothesis of
Theorem \ref{th1}, let
\begin{equation}\label{eqOSKRSS10.36b} \int\limits_{D(p_0,\varepsilon_0)}\Phi_{p_0}\left(K_{f}(p)\right)\ dh(p)\ <\ \infty
\qquad \forall\ p_0\in\partial D\end{equation} for
$\varepsilon_0=\varepsilon(p_0)$ and a nondecreasing convex function
$\Phi_{p_0}:[0,\infty)\to[0,\infty)$ with
\begin{equation}\label{eqOSKRSS10.37b}
\int\limits_{\delta(p_0)}^{\infty}\frac{d\tau}{\tau\Phi_{p_0}^{-1}(\tau)}\
=\ \infty\end{equation} for $\delta(p_0)>\Phi_{p_0}(0)$. Then $f$ is
extended to a homeomorphism of $\overline{D}_P$ onto
$\overline{D'}_P$.}
\end{theorem}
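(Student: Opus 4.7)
The plan is to derive Theorem \ref{th5} as a direct consequence of Theorem \ref{th2} by invoking Theorem \ref{th6} at each boundary point to translate the Orlicz-type hypothesis (\ref{eqOSKRSS10.36b})--(\ref{eqOSKRSS10.37b}) into the divergence condition (\ref{e:6.4dc}) that feeds Theorem \ref{th2}. No new analytic input should be needed beyond combining these two previously established results with a routine change of length element on the infinitesimal circles $S(p_0,r)$.

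First, I would fix an arbitrary $p_0\in\partial D$ and, shrinking $\varepsilon_0=\varepsilon(p_0)$ if necessary, identify a neighborhood of $p_0$ with the hyperbolic disk $\Bbb H(\varepsilon_0)\subset\Bbb D$ via a conformal chart sending $p_0$ to $0$, so that $h(\cdot,p_0)$ becomes the hyperbolic distance from the origin and $ds_h$ the hyperbolic length element; this is precisely the model in which Theorem \ref{th6} is formulated. Because $K_f$ is extended by zero outside $D$, while $\Phi_{p_0}(0)$ is finite and $\Bbb H(\varepsilon_0)$ has finite hyperbolic area, hypothesis (\ref{eqOSKRSS10.36b}) translates at once to $\int_{\Bbb H(\varepsilon_0)}\Phi_{p_0}(K_f(z))\,dh(z)<\infty$, which is the finiteness of the mean integral value required by Theorem \ref{th6}.

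Next I would apply Theorem \ref{th6} with $Q=K_f$ and $\Phi=\Phi_{p_0}$; the divergence hypothesis (\ref{eqOSKRSS10.37b}) is precisely (\ref{eq3.333a}), and its conclusion (\ref{eq3.333A}) yields $\int_0^{\varepsilon_0} d\rho/(\rho\,q(\rho))=\infty$, where $q(\rho)$ is the hyperbolic-circle average of $K_f$ on $S(p_0,\rho)$. Since the hyperbolic circle $S(p_0,\rho)$ has length $2\pi\sinh\rho$, definition (\ref{e:6.6ch}) gives $\|K_f\|(p_0,\rho)=2\pi\sinh\rho\cdot q(\rho)$, and the elementary inequality $\sinh\rho\le\rho\cosh\varepsilon_0$ for $\rho\in(0,\varepsilon_0)$ implies
$$\int_0^{\varepsilon_0}\frac{d\rho}{\|K_f\|(p_0,\rho)}\ \ge\ \frac{1}{2\pi\cosh\varepsilon_0}\int_0^{\varepsilon_0}\frac{d\rho}{\rho\,q(\rho)}\ =\ \infty\,,$$
so that (\ref{e:6.4dc}) holds at $p_0$. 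Since $p_0\in\partial D$ was arbitrary, Theorem \ref{th2} then delivers the desired homeomorphic extension of $f$ to $\overline D_P$ onto $\overline{D^{\prime}}_P$.

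The only point requiring mild care is the identification of the ambient metric $h$ on $\Bbb S$ near $p_0$ with the disk-model hyperbolic metric in which Theorem \ref{th6} was proved, together with the handling of the extension-by-zero convention in the mean-value argument. Both issues are cosmetic in view of the language already adopted in Theorems \ref{th2} and \ref{th6}; I expect the proof to reduce, after these notational remarks, to the chain of implications $(\ref{eqOSKRSS10.36b})+(\ref{eqOSKRSS10.37b})\Rightarrow(\ref{eq3.333A})\Rightarrow(\ref{e:6.4dc})\Rightarrow$ Theorem \ref{th2}, with the averaging identity $\|K_f\|(p_0,\rho)=2\pi\sinh\rho\cdot q(\rho)$ as the only concrete computation.
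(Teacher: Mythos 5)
Your proposal follows essentially the same route as the paper: both derive (\ref{e:6.4dc}) from (\ref{eqOSKRSS10.36b})--(\ref{eqOSKRSS10.37b}) via Theorem \ref{th6} and then invoke Theorem \ref{th2}, and your explicit bridge $\|K_f\|(p_0,\rho)=2\pi\sinh\rho\cdot q(\rho)$ with $\sinh\rho\le\rho\cosh\varepsilon_0$ is exactly the computation the paper leaves implicit. The only point you omit is that Theorem \ref{th6} is stated for the hyperbolic disk, so for elliptic and parabolic Riemann surfaces one must instead use the Euclidean-plane analogue (Theorem 3.1 in \cite{RSY}), as the paper notes.
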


\begin{proof} Indeed, in the case of the hyperbolic Riemann
surfaces, (\ref{eqOSKRSS10.36b}) and (\ref{eqOSKRSS10.37b}) imply
(\ref{e:6.4dc}) by Theorem \ref{th6} and, after this, Theorem
\ref{th5} becomes a direct consequence of Theorem \ref{th2}. In the
more simple case of the elliptic and parabolic Riemann surfaces, we
similarly can apply Theorem 3.1 in \cite{RSY} for the Euclidean
plane instead of Theorem \ref{th6}. \end{proof}$\Box$

\begin{corollary}\label{cor4} {\it In particular, the conclusion of Theorem
\ref{th5} holds if
\begin{equation}\label{eqOSKRSS6.6.6}
\int\limits_{D(p_0,\varepsilon_0)}e^{\alpha_0 K_{f}(p)}\ dh(p)\ <\
\infty\qquad \forall\ p_0\in\partial D\end{equation} for some
$\varepsilon_0=\varepsilon(p_0)>0$ and $\alpha_0=\alpha(p_0)>0$.}
\end{corollary}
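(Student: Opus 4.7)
The plan is to deduce Corollary \ref{cor4} as a direct specialization of Theorem \ref{th5} by selecting the exponential convex function $\Phi_{p_0}(t)=e^{\alpha_0 t}$ for each $p_0\in\partial D$, where $\alpha_0=\alpha(p_0)>0$ is the constant provided by the hypothesis. First I would record that this $\Phi_{p_0}$ is nondecreasing and convex on $[0,\infty)$, as is standard for the exponential, and that $\Phi_{p_0}(0)=1$. The integrability hypothesis (\ref{eqOSKRSS10.36b}) of Theorem \ref{th5} is then precisely the assumption (\ref{eqOSKRSS6.6.6}) of the corollary, with the same $\varepsilon_0=\varepsilon(p_0)$.

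Next I would verify the divergence condition (\ref{eqOSKRSS10.37b}). For $\Phi_{p_0}(t)=e^{\alpha_0 t}$ the inverse is
\begin{equation*}
\Phi_{p_0}^{-1}(\tau)\ =\ \frac{1}{\alpha_0}\,\log\tau\qquad\text{for }\tau\ge 1,
\end{equation*}
so that for any $\delta(p_0)>1=\Phi_{p_0}(0)$ (take, say, $\delta(p_0)=e$) one has
\begin{equation*}
\int\limits_{\delta(p_0)}^{\infty}\frac{d\tau}{\tau\,\Phi_{p_0}^{-1}(\tau)}\ =\ \alpha_0\int\limits_{\delta(p_0)}^{\infty}\frac{d\tau}{\tau\,\log\tau}\ =\ \alpha_0\int\limits_{\log\delta(p_0)}^{\infty}\frac{du}{u}\ =\ \infty
\end{equation*}
after the substitution $u=\log\tau$. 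Thus both hypotheses of Theorem \ref{th5} are satisfied with the chosen $\Phi_{p_0}$, and the conclusion, namely that $f$ extends to a homeomorphism of $\overline{D}_P$ onto $\overline{D'}_P$, follows at once.

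There is essentially no obstacle: the entire content is choosing the correct convex test function and a one-line logarithmic integral check. The only mild care required is to pick $\delta(p_0)$ strictly greater than $\Phi_{p_0}(0)=1$ so that $\Phi_{p_0}^{-1}(\delta(p_0))>0$, which makes the divergent integral legitimate in the sense required by Theorem \ref{th5}; this is automatic with any $\delta(p_0)>1$.
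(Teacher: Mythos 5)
Your proposal is correct and is precisely the intended derivation: the paper states Corollary \ref{cor4} as an immediate specialization of Theorem \ref{th5}, and the choice $\Phi_{p_0}(t)=e^{\alpha_0 t}$ with the divergence of $\alpha_0\int_{\delta}^{\infty}\frac{d\tau}{\tau\log\tau}$ for $\delta>1=\Phi_{p_0}(0)$ is exactly the verification left implicit there. Nothing is missing.
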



\begin{remark}\label{rmOSKRSS200000}
{\rm Note that by Theorem 5.1 and Remark 5.1 in \cite{KR2} condition
(\ref{eqOSKRSS10.37b}) is not only sufficient but also necessary for
a continuous extendibility to the boundary of all mappings $f$ with
the integral restriction (\ref{eqOSKRSS10.36b}).

Note also that by Theorem 2.1 in \cite{RSY}, see also Proposition
2.3 in \cite{RS1}, (\ref{eqOSKRSS10.37b}) is equivalent to every of
the conditions from the following series:
\begin{equation}\label{eq333Y}\int\limits_{\delta(p_0)}^{\infty}
H'_{p_0}(t)\ \frac{dt}{t}=\infty\ ,\quad\ \delta(p_0)>0\
,\end{equation}
\begin{equation}\label{eq333F}\int\limits_{\delta(p_0)}^{\infty}
\frac{dH_{p_0}(t)}{t}=\infty\ ,\quad\ \delta(p_0)>0\ ,\end{equation}
\begin{equation}\label{eq333B}
\int\limits_{\delta(p_0)}^{\infty}H_{p_0}(t)\ \frac{dt}{t^2}=\infty\
,\quad\ \delta(p_0)>0\ ,
\end{equation}
\begin{equation}\label{eq333C}
\int\limits_{0}^{\Delta(p_0)}H_{p_0}\left(\frac{1}{t}\right)\,dt=\infty\
,\quad\ \Delta(p_0)>0\ ,
\end{equation}
\begin{equation}\label{eq333D}
\int\limits_{\delta_*(p_0)}^{\infty}\frac{d\eta}{H_{p_0}^{-1}(\eta)}=\infty\
,\quad\ \delta_*(p_0)>H_{p_0}(0)\ ,
\end{equation}
where
\begin{equation}\label{eq333E}
H_{p_0}(t)=\log\Phi_{p_0}(t)\ .\end{equation}

Here the integral in (\ref{eq333F}) is understood as the
Lebesgue--Stieltjes integral and the integrals in (\ref{eq333Y}) and
(\ref{eq333B})--(\ref{eq333D}) as the ordinary Lebesgue integrals.


It is necessary to give one more explanation. From the right hand
sides in the conditions (\ref{eq333Y})--(\ref{eq333D}) we have in
mind $+\infty$. If $\Phi_{p_0}(t)=0$ for $t\in[0,t_*(p_0)]$, then
$H_{p_0}(t)=-\infty$ for $t\in[0,t_*(p_0)]$ and we complete the
definition $H'_{p_0}(t)=0$ for $t\in[0,t_*(p_0)]$. Note, the
conditions (\ref{eq333F}) and (\ref{eq333B}) exclude that $t_*(p_0)$
belongs to the interval of integrability because in the contrary
case the left hand sides in (\ref{eq333F}) and (\ref{eq333B}) are
either equal to $-\infty$ or indeterminate. Hence we may assume in
(\ref{eq333Y})--(\ref{eq333C}) that $\delta(p_0)>t_0$,
correspondingly, $\Delta(p_0)<1/t(p_0)$ where
$t(p_0):=\sup\limits_{\Phi_{p_0}(t)=0}t$, set $t(p_0)=0$ if
$\Phi_{p_0}(0)>0$.


The most interesting among the above conditions is (\ref{eq333B}),
i.e. the condition:
\begin{equation}\label{eq5!}
\int\limits_{\delta(p_0)}^{\infty}\log \Phi_{p_0}(t)\ \
\frac{dt}{t^{2}}\ =\ +\infty\ \ \ \ \ \ \ \ \hbox{for some}\ \ \
\delta(p_0)\ >\ 0\ .
\end{equation}

Finally, it is necessary to note that the restriction on
nondegeneracy of boundary components of domains in Theorem \ref{th1}
as well as in all other theorems is not essential because this
simplest case is included in our previous paper \cite{RV3}. }
\end{remark}

\bigskip

\noindent
{\bf Vladimir Ryazanov, Sergei Volkov,}\\
Institute of Applied Mathematics and Mechanics,\\
National Academy of Sciences of Ukraine,\\
84116, Ukraine, Slavyansk, 19 General Batyuk Str.,\\
vl.ryazanov1@gmail.com, sergey.v.volkov@mail.ru

\end{document}